\title{The ring of real-valued functions which are continuous on a dense cozero set}
\theoremstyle{plain}
\newtheorem{theorem}{Theorem}[section]
\newtheorem{lemma}[theorem]{Lemma}
\newtheorem{proposition}[theorem]{Proposition}
\theoremstyle{definition}
\newtheorem{definition}[theorem]{Definition}
\newtheorem{remark}[theorem]{Remark}
\newtheorem{counter example}[theorem]{Counter Example}
\newtheorem{corollary}[theorem]{Corollary}
\newtheorem{example}[theorem]{Example}
\numberwithin{equation}{section}
\author[A. Dey]{Amrita Dey}	\address{Department of Pure Mathematics, University of Calcutta, 35, Ballygunge Circular Road, Kolkata 700019, West Bengal, India}	\email{deyamrita0123@gmail.com}
\author[S. Bag]{Sagarmoy Bag}	\address{Department of Mathematics, Bankim Sardar College, Tangrakhali, West Bengal 743329, India}	\email{sagarmoy.bag01@gmail.com}
\author[D. Mandal]{Dhananjoy Mandal} \address{Department of Pure Mathematics, University of Calcutta, 35, Ballygunge Circular Road, Kolkata 700019, West Bengal, India}  \email{dmandal.cu@gmail.com, dmpm@caluniv.ac.in}
\keywords{Almost $P$-space, Nowhere almost $P$-space, perfectly normal, $z$-ideal, Noetherian ring, regular ring}
\subjclass[2020]{Primary 54C30; Secondary 13C99}
\begin{document}
	
	\title[On $T''(X)$]{The ring of real-valued functions which are continuous on a dense cozero set}

	%    General info
	%%%%%%%%%%%%%%%%%%%%%%%%%%%%%%%%%%%%%%%%%%%%%%%%%%%
	%
	%         Please use the current 2010 Mathematics Subject Classification:             %
	%         http://www.ams.org/mathscinet/msc/                                                        %
	%         http://www.zentralblatt-math.org/msc/en/                                                 %
	%%%%%%%%%%%%%%%%%%%%%%%%%%%%%%%%%%%%%%%%%%%%%%%%%%%

	\thanks {The first author is immensely grateful for the award of research fellowship provided by the University Grants Commission, New Delhi (NTA Ref. No. 221610014636).}

	\large
	\begin{abstract}
		Let $T''(X)$ and $T'(X)$ denote the collections of all real-valued functions on $X$ which are continuous on a dense cozero set and on an open dense subset of $X$ respectively. $T''(X)$ contains $C(X)$ and forms a subring of $T'(X)$ under pointwise addition and multiplication. We inquire when $T''(X)=C(X)$ and when $T''(X)=T'(X)$. We also ponder over the question when is $T''(X)$ isomorphic to $C(Y)$ for some topological space $Y$. We investigate some algebraic properties of the ring, $T''(X)$ for a Tychonoff space $X$. We provide several characterisations of $T''(X)$ as a Von-Neumann regular ring. We define nowhere almost $P$-spaces using the ring $T''(X)$ and characterise it as a Tychonoff space which has no non-isolated almost $P$-points. We show that a Tychonoff space with countable pseudocharacter is a nowhere almost $P$-space and highlight that this condition is not superflous using the closed ordinal space. 
		%We further study the ring $T''(X)$ for a nowhere almost $P$-space.
	\end{abstract}	
		\maketitle
	
	\section{Introduction}
	
Let $X$ be a $T_1$ topological space and $C(X)$ denote the collection of all real-valued continuous functions on $X$. A set $U\subseteq X$ is said to be a cozero set in $X$ if $U=\{x\in X\colon f(x)\neq 0 \}$ for some $f\in C(X)$. We write $U=coz(f)$ in this case. The zero set of $f$, denoted by $Z(f)$, is the complement of $coz(f)$. Even though we use the notations $Z(f)=\{x\in X\colon f(x)=0\}$ and $coz(f)=X\setminus Z(f)$ for any function $f\in \mathbb{R}^X$, by a ``\textit{zero set}" (resp. ``\textit{cozero set}") we shall always mean that of a continuous function. Throughout this article we use the notation $\chi_A$ to denote the `characteristic function' of a subset, $A$ of $X$. That is, $\chi_A\colon X\longrightarrow \mathbb{R}$ is defined as follows: $\chi_A(x)=\begin{cases}
	1 &\text{ if }x\in A \\ 0 &\text{ otherwise}
\end{cases}.$	

A topological space $X$ is said to be an \textbf{almost $P$-space} \cite{Levy} if every non-empty $G_\delta$-set (equivalently, for a Tychonoff space, every non-empty zero set) has non-empty interior. Furthermore a point $p\in X$ is said to be an \textbf{almost $P$-point} if every non-empty $G_\delta$-set (equivalently, for a Tychonoff space, every non-empty zero set) containing $p$ has non-empty interior. Clearly $X$ is an almost $P$-space if and only if each point  in $X$ is an almost $P$-point. A topological space $X$ is said to be \textbf{perfectly normal} if it is normal and each closed set is a $G_\delta$-set in $X$. Equivalently a space $X$ is a perfectly normal space if each closed set in $X$ is a zero set in $X$. 

A ring $R$ is said to be an \textbf{almost regular ring}, or a \textbf{classical ring} if each element in $R$ is either a unit or a zero divisor. An ideal $I$ of a commutative ring $R$ with unity is a \textbf{$\boldsymbol{z}$-ideal} if for each $a\in I$, $M(a)\subseteq I$, where $M(a)$ denotes the intersection of all maximal ideals of $R$ containing $a$. A ring $S$ containing a reduced ring $R$ is called a \textbf{ring of quotients} of $R$ if and only if for each $s\in S\setminus \{0\}$, there exists $r\in R$ such that $sr\in R\setminus \{0\}$. An ideal $I$ of a ring $R$ is said to be its \textbf{minimal ideal} if it does not properly contain any non-trivial ideal. The \textbf{socle} of a ring $R$ is defined as the sum of its minimal ideals.

The ring $T'(X)$, consisting of real-valued functions on $X$ that are continuous on an open dense subset of $X$, was introduced by M. R. Ahmadi Zand \cite{A2010} in the year 2010. They established various algebraic properties of the ring $T'(X)$. Later on, in 2018, Gharabaghi et. al. \cite{GGT2018} studied some more properties of the ring.

This motivated us to collect all the real-valued functions on $X$ that are continuous on a dense cozero set in $X$, and we denote it as $T''(X)$.  In Section 2, we establish that  $T''(X)$ properly contains the ring $C(X)$ and contained in the ring $T'(X)$. We observe that for a Tychonoff space $X$, the ring $T''(X)$ coincides with $C(X)$ if and only if $X$ is an almost $P$-space and highlight that this result may fail if $X$ fails to be a Tychonoff space, using a suitable counterexample. We also discuss conditions under which $T''(X)$ is isomorphic to $C(Y)$ for some topological space $Y$.
% Here we note that $T''(X)$ is not isomorphic to a $C(Y)$ if $X$ contains a countably infinite subset $N$ such that $N$ is not nowhere dense, $X\setminus N$ is dense in $X$ and each singleton set in $N$ is a zero set. In particular, this is true for any non-trivial separable normed linear space $X$. Furthermore, $T''(X)=T'(X)$ if $X$ is a perfectly normal topological space. We state an example in which $T'(X)$ properly contains $T''(X)$. We also wonder if this equality holds for any space which is not a perfectly normal space.
In the literature it is observed that the ring $C(X)_F$, of real-valued functions on $X$ which are continuous every where on $X$ except on a finite set (\cite{GGT2018}), lies between $C(X)$ and $T'(X)$, that is, $C(X)\subseteq C(X)_F\subseteq T'(X)$.  In Section 2, we show that for a Tychonoff non-discrete almost $P$-space, $T''(X)$ is a proper subring of $C(X)_F$ and for a perfectly normal space, $X$ having an infinite nowhere dense closed set, $C(X)_F$ is a proper subring of $T''(X)$. 

We denote the collection $\{Z(f)\colon f\in T''(X) \}$ by $Z''[X]$ and as usual $Z[X]=\{ Z(f)\colon f\in C(X)\}$. Also, for an ideal $I$ of $T''(X)$, we denote the collection $\{Z(f)\colon f\in I \}$ by $Z[I]$ and for a collection $\mathfrak{F}\subseteq Z''[X]$, we denote the family $\{f\in T''(X)\colon Z(f)\in \mathfrak{F} \}$ by $Z^{-1}(\mathfrak{F})$. 

 At the end of Section 2, we introduce $z''$-filters and $z''$-ultrafilters on $X$. We investigate that for a proper ideal $I$ of $T''(X)$, $Z[I]$ is a $z''$-filter on $X$ and furthermore if $I$ is a maximal ideal, then $Z[I]$ is a $z''$-ultrafilter on $X$. Also shows that if $\mathfrak{F}$ is a $z''$-filter (resp. $z''$-ultrafilter) on $X$, $Z^{-1}(\mathfrak{F})$ is a proper ideal (resp. maximal ideal) of $T''(X)$.

In Section 3, we deal with certain algebraic properties of the ring $T''(X)$ and relate such properties with topological properties of the space $X$. We start by describing $z$-ideals of $T''(X)$ using zero sets of functions in $T''(X)$. Then we identify the non-zero minimal ideals of $T''(X)$ and use this to characterise the socle of $T''(X)$. We end this section by showing that for a Tychonoff space $X$, $T''(X)$ is a Noetherian ring (and/or Artinian ring) if and only if $X$ is finite.

A commutative ring $R$ with unity is said to be a \textbf{Von-Neumann regular ring} (or simply, a regular ring) if for each $a\in R$, there exists an $x\in R$ such that $a=a^2x$. It is well known that $T'(X)$ is always a (Von-Neumann) regular ring \cite{A2010}. We wonder if we can conclude the same for the ring $T''(X)$ or not. The answer (in general) is no. In Section 4, we discuss when is $T''(X)$ a regular ring. We realise that for a Tychonoff $P$-space, $T''(X)$ is a regular ring. But the converse of this observation is not true. Moreover, we get a large class of topological spaces, including non-trivial normed linear spaces, which are not $P$-spaces but for whom the rings of the form $T''(X)$ are regular rings. Also we show that if $X$ is a perfectly normal space, then $T''(X)$ is a regular ring and highlight that the converse of this result is not true either. We terminate this section with several characterisations of $T''(X)$ as a regular ring.

The functions of the form $\chi_{\{p\}}$, for some $p\in X$ have been useful in establishing various algebraic properties of rings such as $C(X)_F$ \cite{GGT2018}, $T'(X)$ \cite{GGT2018,A2010} and $B_1(X)$ \cite{2RM2019}. In this context, we call a topological space $X$ a \textbf{nowhere almost }$\boldsymbol{P}$\textbf{-space} if $\chi_{\{p\}}\in T''(X)$ for all $p\in X$. Section 5 of this article is devoted to study this class of topological spaces. We characterise a Tychonoff nowhere almost $P$-space topologically as a Tychonoff space $X$ which has no non-isolated almost $P$-point. It follows from this topological characterisation that no non-discrete Tychonoff space can be both a nowhere almost $P$-space and an almost $P$-space. A topological space $X$ is said to be of \textbf{countable pseudocharacter} if every singleton set is a $G_\delta$-set \cite{E1977}. Clearly, a perfectly normal space is of countable pseudocharacter. We observe that a Tychonoff space having countable pseudocharacter is a nowhere almost $P$-space and infer that perfectly normal spaces are nowhere almost $P$-spaces. We realise that the condition that `$X$ is of countable pseudocharacter' is not a redundant condition, with the help of the closed ordinal space. Furthermore we show that $C(X)_F\subseteq T''(X)$ if and only if $X$ is a nowhere almost $P$-space. We establish that for a nowhere almost $P$-space $X$, $T''(X)$ is an almost regular ring and disprove the converse with an example. We highlight that a nowhere almost $P$-space having atleast one non-isolated point cannot be an almost $P$-space. We conclude from this that for a nowhere almost $P$-space $X$, $C(X)=T''(X)$ if and only if $X$ is discrete. We end this section with a few algebraic properties of $T''(X)$ under the assumption that $X$ is a nowhere almost $P$-space.

We conclude this article by raising a few open problems for the reader.

\section{On $T''(X)$}  \label{sec2}

%Let $X$ be a $T_1$ topological space and let $T''(X)$ denote the collection of all such real-valued functions on $X$ that are continuous on a dense cozero set in $X$.

We initiate this section with the following theorem which follows from the facts that intersection of two open dense sets is a dense set and that the intersection of two cozero sets is also a cozero set.
\begin{theorem} \label{t2.1}
	$T''(X)$ is a ring under pointwise addition and multiplication.
\end{theorem}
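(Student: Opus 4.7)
The plan is to unpack the definition and verify the ring axioms, relying on the two facts highlighted by the authors. Start with $f,g \in T''(X)$, so there exist dense cozero sets $U_f = coz(\varphi)$ and $U_g = coz(\psi)$ (with $\varphi,\psi \in C(X)$) such that $f$ restricted to $U_f$ and $g$ restricted to $U_g$ are continuous.

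Next, observe $U_f \cap U_g = coz(\varphi\psi)$, which is a cozero set, and it is dense because the intersection of two open dense subsets of $X$ is dense. On this common dense cozero set both $f$ and $g$ are continuous, so $f+g$, $-f$, and $fg$ are continuous on $U_f \cap U_g$. This shows that each of these functions belongs to $T''(X)$, giving closure under addition, additive inverse, and multiplication.

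To finish, I would note that the constant functions $0$ and $1$ lie in $C(X) \subseteq T''(X)$ since they are continuous on $X = coz(1)$, which is a dense cozero set. Associativity, commutativity, and distributivity hold because they hold pointwise in $\mathbb{R}^X$, and $T''(X)$ inherits them as a subset. So $T''(X)$ is a (commutative) subring of $\mathbb{R}^X$ (in fact a subring of $T'(X)$) under the pointwise operations.

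There is no genuine obstacle here; the only subtlety worth stating carefully is that the witnessing dense cozero set for the sum or product is $coz(\varphi\psi)$, so the proof uses both the product formula $coz(\varphi) \cap coz(\psi) = coz(\varphi\psi)$ and the density of intersections of open dense sets precisely as indicated in the preamble to the theorem.
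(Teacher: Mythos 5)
Your argument is correct and follows exactly the route the paper indicates: the paper gives no written proof but states that the theorem "follows from the facts that intersection of two open dense sets is a dense set and that the intersection of two cozero sets is also a cozero set," which is precisely what you use via $coz(\varphi)\cap coz(\psi)=coz(\varphi\psi)$. Your additional verification of the identities and the inherited axioms is routine and fine.
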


It is easy to verify that  $C(X)\subseteq T''(X)\subseteq T'(X)$. \\

The following theorem is a  characterisation of an  almost $P$-space using this ring.
\begin{theorem} \label{t2.2}
	For a Tychonoff space $X$, $C(X)=T''(X)$ if and only if $X$ is an almost $P$-space.
\end{theorem}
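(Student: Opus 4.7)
The plan is to prove the biconditional by two short arguments, each exploiting the interplay between density of a cozero set $U = coz(h)$ and the almost $P$ property applied to its complement $Z(h)$.

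For the reverse implication ($X$ almost $P \Rightarrow T''(X) \subseteq C(X)$), let $f \in T''(X)$, so $f$ is continuous on some dense cozero set $U = coz(h)$ with $h \in C(X)$. Then $Z(h) = X \setminus U$ is a zero set, and since $U$ is dense, $\mathrm{int}\, Z(h) = \emptyset$. The almost $P$ hypothesis forces $Z(h) = \emptyset$, so $U = X$ and $f$ is continuous on all of $X$. Combined with the trivial containment $C(X) \subseteq T''(X)$, this gives $C(X) = T''(X)$. This direction uses almost nothing beyond the definitions; the main observation is that on an almost $P$-space the only dense cozero set is $X$ itself.

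For the forward implication, I would argue contrapositively. Assume $X$ is not an almost $P$-space and produce some $g \in T''(X) \setminus C(X)$. By hypothesis there is a non-empty zero set $Z = Z(h)$ with $\mathrm{int}\, Z = \emptyset$, equivalently $U := coz(h)$ is a dense cozero set. Define
\[
g(x) = \chi_{X\setminus Z}(x) = \begin{cases} 1 & x \in U,\\ 0 & x \in Z.\end{cases}
\]
Then $g$ restricted to the dense cozero set $U$ is the constant function $1$, hence continuous on $U$, so $g \in T''(X)$. On the other hand $g \notin C(X)$: if it were continuous on $X$, then since $U$ is dense and $g|_U \equiv 1$, continuity would force $g \equiv 1$ on $X$, contradicting $g(p) = 0$ for any $p \in Z$ (and $Z \neq \emptyset$). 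Thus $C(X) \subsetneq T''(X)$, completing the contrapositive.

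I do not anticipate any serious obstacle: the argument reduces entirely to the observation that ``$coz(h)$ is dense'' is equivalent to ``$Z(h)$ has empty interior,'' which meshes exactly with the almost $P$ condition on zero sets. The only point worth flagging is that Tychonoffness is invoked implicitly via the stated equivalence (for Tychonoff spaces) between ``every non-empty $G_\delta$-set has non-empty interior'' and ``every non-empty zero set has non-empty interior''; this is the characterisation of almost $P$-spaces one actually uses in both directions.
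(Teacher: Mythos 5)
Your proof is correct and follows essentially the same route as the paper: the forward direction takes the characteristic function of the dense cozero set $coz(h)$ as the witness in $T''(X)\setminus C(X)$, and the reverse direction observes that on an almost $P$-space the only dense cozero set is $X$ itself. Your added justification that $g\notin C(X)$ (via density of $U$ forcing $g\equiv 1$) is a detail the paper leaves implicit, but the argument is the same.
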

\begin{proof}
	If $X$ is not an almost $P$-space, there exists an $f\in C(X)$ such that $Z(f)\neq \emptyset$ but $int$ $Z(f)=\emptyset$. Then $coz(f)$ is dense in $X$ and so $g=\chi_{coz(f)}$ is a member of $T''(X)$ but not of $C(X)$.
	
	Conversely, let $X$ be an almost $P$-space and $f\in T''(X)$. Then there exists $g\in C(X)$ such that $coz(g)$ is dense in $X$ and $f|_{coz(g)}$ is continuous. Since $coz(g)$ is dense in $X$, $int~ Z(g)=\emptyset$. It follows from our assumption that $Z(g)=\emptyset$ and so $f$ is continuous on $X(=coz(g))$.
\end{proof}

It is important to point out that the above result may fail if $X$ is not a Tychonoff space.

\begin{example} \label{eg1}
	Consider the topological space $X=\mathbb{R}$ equipped with the cofinite topology. It is clear that $\displaystyle{\mathbb{R}\setminus \mathbb{N}=\bigcap_{n\in \mathbb{N}} (\mathbb{R}\setminus \{n\})}$ is a $G_\delta$-set with empty interior. Thus $X$ is not an almost $P$-space. But $T''(X)=C(X)$ as the only cozero sets in $X$ are $\emptyset$ and $\mathbb{R}$.
\end{example}

We further wonder whether $T''(X)$ shall be isomorphic to $C(Y)$ for some topological space $Y$. Clearly, the answer is in affirmative for a Tychonoff almost $P$-space. For general case, we first recall that the ring $C(Y)$ is closed under uniform limit, for any topological space $Y$. Thus to show $T''(X)\cong C(Y)$ for some topological space $Y$, it first needs $T''(X)$ is to be closed under uniform limit. The following example shows that for a Tychonoff space $X$, $T''(X)$ is not isomorphic to $C(Y)$ for any topological space $Y$.

\begin{example} \label{eg0}
	Consider the real line, $\mathbb{R}$. Since $\mathbb{R}$ is a metric space, each closed set in $X$ is a zero set. This ensures that $T''(\mathbb{R})=T'(\mathbb{R})$. Suppose $\{r_n\}$ is an enumeration of rationals in $\mathbb{R}$ and define for each $m\in \mathbb{N}$, $f_m\colon \mathbb{R}\longrightarrow \mathbb{R}$ as follows: \[f_m(x)=\begin{cases}
		\frac{1}{n} &\text{ if }x=r_n,\, n\leq m\\
		0 &\text{ otherwise}
	\end{cases}. \] Then the sequence $\{f_n\}$ converges uniformly to $f\colon \mathbb{R}\longrightarrow \mathbb{R}$ where \[ f(x)=\begin{cases}
		\frac{1}{n} &\text{ if }x=r_n,\, n\in \mathbb{N}\\
		0 &\text{ otherwise}
	\end{cases}. \] Note that each $f_n$ is a member of $T''(\mathbb{R})$ but $f$ is not. Thus $T''(\mathbb{R})$ is not closed under uniform limit and hence $T''(\mathbb{R})$ is not isomorphic to $C(Y)$ for any topological space $Y$.
\end{example}

We use the above example as a motivation to establish the following theorem.

\begin{theorem} \label{t2.4}
	Let $X$ be a Tychonoff space such that it contains a countably infinite subset $N$ of $X$ satisfying the following conditions:
	\begin{enumerate}
		\item $N$ is not nowhere dense in $X$.
		\item $X\setminus N$ is dense in $X$.
		\item Each singleton subset of $N$ is a zero set in $X$.
	\end{enumerate}
	Then $T''(X)$ is not closed under uniform limit and hence it is not isomorphic to $C(Y)$ for any topological space $Y$. 
\end{theorem}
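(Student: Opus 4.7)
The plan is to imitate Example~\ref{eg0} in the present setting. Enumerate $N=\{x_n:n\in\mathbb{N}\}$ and define, for each $m\in\mathbb{N}$, $f_m\colon X\to\mathbb{R}$ by $f_m(x_n)=1/n$ if $n\le m$ and $f_m(x)=0$ otherwise; analogously set $f(x_n)=1/n$ for every $n$ and $f(x)=0$ elsewhere. A direct estimate gives $\sup_{x\in X}|f_m(x)-f(x)|=\tfrac{1}{m+1}$, so $f_m\to f$ uniformly on $X$.

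Next I would check $f_m\in T''(X)$ for each $m$. By hypothesis~(3) each singleton $\{x_i\}\subseteq N$ is a zero set of $X$, so $X\setminus\{x_i\}$ is a cozero set, and a finite intersection of cozero sets is a cozero set. Hence $U_m:=X\setminus\{x_1,\dots,x_m\}$ is a cozero set; as $U_m\supseteq X\setminus N$, which is dense by hypothesis~(2), $U_m$ is itself dense in $X$. Since $f_m\equiv 0$ on $U_m$, we obtain $f_m\in T''(X)$.

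The crux---and the main obstacle---is to prove $f\notin T''(X)$. Suppose toward a contradiction that $f|_U$ is continuous for some dense cozero set $U=coz(g)$. I would argue in two steps. \emph{Step 1:} $U\cap N=\emptyset$. Indeed, if $x_k\in U$, then $U$ is open in $X$ and $X\setminus N$ is dense in $X$, so $(X\setminus N)\cap U$ is dense in $U$; hence every neighbourhood of $x_k$ inside $U$ meets $X\setminus N$, where $f$ vanishes, contradicting $f(x_k)=1/k>0$ and continuity of $f|_U$ at $x_k$. Therefore $U\subseteq X\setminus N$, so $g\equiv 0$ on $N$. \emph{Step 2:} Invoke hypothesis~(1): the open set $V:=\operatorname{int}(\cl N)$ is non-empty, and $V\subseteq\cl N$ makes $N\cap V$ dense in $V$. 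Continuity of $g$ together with $g|_{N\cap V}\equiv 0$ then forces $g\equiv 0$ on all of $V$, i.e.\ $U\cap V=\emptyset$, contradicting the density of $U$ in $X$.

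The final assertion is immediate once $f\notin T''(X)$ is established: $T''(X)$ fails to be closed under uniform limits, whereas every $C(Y)$ is closed under uniform limits, ruling out any isomorphism $T''(X)\cong C(Y)$. I expect the genuine difficulty to lie in the two-step contradiction above: the density of $X\setminus N$ pushes $U$ off $N$, while the density of $N$ inside $V$ pushes the defining function $g$ off $V$, and these two competing densities cannot coexist with $U$ being a dense cozero set in $X$.
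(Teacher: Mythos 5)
Your proposal is correct and follows essentially the same route as the paper: the same functions $f_m$ and $f$, the same verification that $f_m\in T''(X)$ via the dense cozero set $X\setminus\{x_1,\dots,x_m\}$, and the same contradiction showing $f\notin T''(X)$ (your Step 1 is the paper's observation that $f$ is discontinuous at every point of $N$, forcing $N\subseteq Z(g)$, and your Step 2 is just an unwound version of the paper's one-liner that $\operatorname{int}\overline{N}\subseteq\operatorname{int}Z(g)=\emptyset$ contradicts hypothesis (1)). No gaps; the explicit computation $\sup_x|f_m(x)-f(x)|=\tfrac{1}{m+1}$ is a nice touch the paper leaves implicit.
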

\begin{proof}
	Let $N=\{r_1,r_2,\cdots,r_n,\cdots \}$. Define for each $m\in \mathbb{N}$, $f_m\colon X\longrightarrow \mathbb{R}$ as follows: \[f_m(x)=\begin{cases}
		\frac{1}{n} &\text{ if }x=r_n,\, n\leq m\\
		0 &\text{ otherwise}
	\end{cases}. \] Then the sequence $\{f_n\}$ converges uniformly to $f\colon X\longrightarrow \mathbb{R}$ where \[ f(x)=\begin{cases}
		\frac{1}{n} &\text{ if }x=r_n,\, n\in \mathbb{N}\\
		0 &\text{ otherwise}
	\end{cases}. \] Since each singleton (and hence each finite) subset of $N$ is a zero set and $X\setminus N$ is dense in $X$, $f_m\in T''(X)$ for each $m\in \mathbb{N}$. 
	
	Now see that for each $r_k\in N$, any open neighbourhood of $r_k$ meets $X\setminus N$. Choose $\epsilon>0$ such that $0\notin (\frac{1}{k}-\epsilon,\frac{1}{k}+\epsilon)$. Then for any neighbourhood $U$ of $r_k$, $f(U)\nsubseteq (\frac{1}{k}-\epsilon,\frac{1}{k}+\epsilon)$. Thus $f$ is discontinuous at each point in $N$. Now suppose $c\in X\setminus N$, then $f(c)=0$. Let $\epsilon>0$. Then by Archimedean property it follows that there exists $k\in \mathbb{N}$ such that $0<\frac{1}{n}<\frac{1}{k}<\epsilon$. Let $U_1$ be any open neighbourhood of $c$ and $U=U_1\setminus \{r_1,r_2,\cdots r_{k-1} \}$. Then $U$ is a open neighbourhood of $c$ and $f(U)\subseteq (f(c)-\epsilon,f(c)+\epsilon)$. Thus $f$ is continuous at each point in $X\setminus N$. We claim that $f\notin T''(X)$. If possible let $f\in T''(X)$. Then there exists $h\in C(X)$ such that $f$ is continuous on $X\setminus Z(h)$ and $X\setminus Z(h)$ is dense in $X$. This implies that $X\setminus Z(h)\subseteq X\setminus N$ which implies that $N\subseteq Z(h)$ and hence $int\;\overline{N}\subseteq int\;Z(h)$. But $int\; Z(h)=\emptyset$ which contradicts that $N$ is not nowhere dense in $X$. Thus $f\notin T''(X)$.
\end{proof}

\begin{example} \label{eg6}
	Note that the third condition on $N$ in Theorem \ref{t2.4} is automatically satisfied if $X$ is of countable pseudocharacter, and hence for perfectly normal spaces. The entire hypothesis of Theorem \ref{t2.4} is true for the real line $\mathbb{R}$ equipped with the usual topology. Moreover, the hypothesis of the aforementioned theorem holds for any non-trivial separable normed linear space. Indeed if $N$ is a countably infinite dense subset of a normed linear space $X$, then it follows from the fact that any open ball in $X$ consists of uncountably many points, that $X\setminus N$ is also dense in $X$.  Moreover, the Sorgenfrey line, $\mathbb{R}_l$ is also satisfies the conditions mentioned in Theorem \ref{t2.4}, for $N=$ the set of all rational numbers, even though $\mathbb{R}_l$ is not a normed linear space. Thus $T''(X)$ is not isomorphic to $C(Y)$ for any topological space $Y$ for any of the above mentioned topological spaces.
	
\end{example}

%	\begin{question}		When is the ring $T''(X)$ isomorphic to $C(Y)$ for some topologcal space $Y$?		\end{question}

%%%%%% TO CHECK % TO CHECK % TO CHECK % TO CHECK % TO CHECK % TO CHECK %%%%%%
%The fact used to establish Example \ref{eg0} is that $X$ has a countable dense subset $\mathbb{Q}$ such that its complement is also dense in $X$. This fact is also true for any normed linear space $X$ having atleast one non-zero vector.

%This motivates us to think about separable spaces.

\begin{remark}
	We observe that (as seen in Example \ref{eg0}), for $X=\mathbb{R}$, $T''(X)=T'(X)$. In fact, this is true for any perfectly normal space. This ensures that Theorem \ref{t2.4} is valid for the ring $T'(X)$ as well, if $X$ is considered to be a perfectly normal space.
\end{remark} 
 
 That $T''(X)$ is in general a proper subring of $T'(X)$ can be noted in the following example.

\begin{example} \label{eg2}
	Let $X$ be the one point compactification of $\mathbb{R}$, endowed with discrete topology. Then the characteristic function $\chi_{\{\infty\}}$ is continuous on $\mathbb{R}$, which is an open dense subset of $X$. However, $\{\infty\}$ is not a $G_\delta$-set and hence it is not a zero set. Also note that no proper subset of $\mathbb{R}$ is dense in $X$. Hence $\chi_{\{\infty\}}\notin T''(X)$.
\end{example}
\begin{remark}
	It is known that the one point compactification of $\mathbb{R}$, endowed with discrete topology, is not a perfectly normal space. Thus Example \ref{eg2} highlights that if $X$ fails to be a perfectly normal space, then $T''(X)\neq T'(X)$.
\end{remark}

Whether there exists a topological space outside the class of perfectly normal spaces for which $T''(X)=T'(X)$ remains an unanswered question.

%	\begin{question}		Does there exists a topological space $X$ which is not perfectly normal but $T'(X)=T''(X)$?		\end{question}

We recall that, like $T''(X)$, the ring $C(X)_F$, of all real-valued functions on $X$ which are continuous everywhere on $X$ except on a finite set (\cite{GGT2018}), lies between $C(X)$ and $T'(X)$, that is, $C(X)\subseteq C(X)_F\subseteq T'(X)$. So naturally we ponder over the relationship between the rings $C(X)_F$ and $T''(X)$.

\begin{remark}
	From Theorem \ref{t2.2}, it follows that if $X$ is a non-discrete Tychonoff almost $P$-space, then $T''(X)=C(X)$ and so $T''(X)$ is a proper subring of $C(X)_F$.
	
	Also if $X$ is a perfectly normal space, then $T''(X)=T'(X)$ and so $C(X)_F\subseteq T''(X)$. Moreover if $X$ contains an infinite nowhere dense closed set, $C(X)_F$ is a proper subring of $T''(X)$. In particular for a non-trivial normed linear space $X$, $C(X)_F$ is a proper subring of $T''(X)$.
	
\end{remark}

We thus realise that no general comparison can be made between the two rings $C(X)_F$ and $T''(X)$. In this context we raise a question stated at the end of the article.
%	\begin{question} \label{q2.8}		When is $C(X)_F=T''(X)$?		\end{question}

However, when is $C(X)_F$ a subring of $T''(X)$ has been answered in Theorem \ref{t4.2} of this article.

Just as in $C(X)$, we have the following characterisation of units in the ring $T''(X)$.

\begin{theorem} \label{t2.3}
	An element $f\in T''(X)$ is  a unit in $T''(X)$ if and only if $Z(f)= \emptyset$.
\end{theorem}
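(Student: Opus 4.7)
The plan is to prove the two directions in the obvious way, mirroring the classical argument for $C(X)$, with the only extra care being to verify that the ``dense cozero set'' witness for $f \in T''(X)$ transfers to its reciprocal.

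For the forward direction, suppose $f$ is a unit in $T''(X)$, so there exists $g \in T''(X)$ with $fg = 1_{T''(X)}$, where $1_{T''(X)}$ is the constant function $1$ on $X$. Evaluating pointwise gives $f(x)g(x) = 1$ for every $x \in X$, which forces $f(x) \neq 0$ for every $x$, hence $Z(f) = \emptyset$.

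For the converse, assume $Z(f) = \emptyset$ and define $g \colon X \longrightarrow \mathbb{R}$ by $g(x) = 1/f(x)$, which is well-defined since $f$ is nowhere zero. Clearly $fg = 1$ pointwise on $X$, so once we show $g \in T''(X)$ we are done. Since $f \in T''(X)$, there exists $h \in C(X)$ such that $coz(h)$ is dense in $X$ and $f|_{coz(h)}$ is continuous. On $coz(h)$ the function $f$ is continuous and never vanishes (as $Z(f) = \emptyset$), so $g|_{coz(h)} = 1/f|_{coz(h)}$ is continuous on $coz(h)$ by the standard rule for reciprocals of nonvanishing continuous real-valued functions. Thus the same witness $h$ shows $g \in T''(X)$, and $f$ is a unit.

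There is essentially no obstacle here; the only subtle point worth mentioning explicitly in the proof is that the \emph{same} dense cozero set $coz(h)$ serves as a witness for both $f$ and $g = 1/f$, which is why the construction stays inside $T''(X)$ rather than merely inside $\mathbb{R}^X$.
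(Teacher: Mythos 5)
Your proof is correct and follows essentially the same route as the paper: the forward direction is immediate from pointwise multiplication, and the converse defines $g = 1/f$ and observes that the same dense cozero set witnessing $f \in T''(X)$ also witnesses $g \in T''(X)$. No issues.
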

\begin{proof}
	Let $f\in T''(X)$ be such that $Z(f)= \emptyset$. Then there exists a dense cozero set $U$ in $X$ such that $f$ is continuous on $U$. Thus $g\colon X\longrightarrow \mathbb{R}$, defined as $g(x)=\frac{1}{f(x)}$ for all $x\in X$; is continuous on $U$ as well and so $g\in T''(X)$ with $fg=\boldsymbol{1}$. 
	
	The converse is obvious.
\end{proof}

The above theorem ensures that for a proper ideal $I$ of $T''(X)$, $Z[I]$ does not contain the empty set. This motivates us to check if the collection $Z[I]$ possesses filter-like properties.

We call a collection $\mathfrak{F}\subseteq Z''[X]$ a $\boldsymbol{z''}$\textbf{-filter} on $X$ if the following conditions hold: \begin{enumerate}
	\item $\emptyset\notin \mathfrak{F}$,
	\item If $Z\in \mathfrak{F}$ and $Z_1\in Z''[X]$ with $Z\subseteq Z_1$, then $Z_1\in \mathfrak{F}$ and
	\item $\mathfrak{F}$ is closed under finite intersections.
\end{enumerate}
We further call a $z''$-filter, $\mathfrak{F}$ on $X$ a $\boldsymbol{z''}$\textbf{-ultrafilter} on $X$ if it is not properly contained in any $z''$-filter on $X$.

The following proposition can be proved by closely following the proofs of Theorems 2.3 and 2.5 in \cite{GJ1976}.
\begin{proposition} \label{p2.1}
	The following statements are true for a topological space $X$.
	\begin{enumerate}
		\item If $I$ is a proper ideal of $T''(X)$, then $Z[I]$ is a $z''$-filter on $X$.
		\item If $\mathfrak{F}$ is a $z''$-filter on $X$, then $Z^{-1}(\mathfrak{F})$ is a proper ideal of $T''(X)$.
		\item If $M$ is a maximal ideal of $T''(X)$, then $Z[M]$ is a $z''$-ultrafilter on $X$.
		\item If $\mathfrak{U}$ is a $z''$-ultrafilter on $X$, then $Z^{-1}(\mathfrak{U})$ is a maximal ideal of $T''(X)$.
	\end{enumerate}
	
\end{proposition}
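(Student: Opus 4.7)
The plan is to mirror the classical Gillman--Jerison argument for $z$-filters and $z$-ultrafilters of $C(X)$, adapting each step to the ring $T''(X)$ and the family $Z''[X]$. The algebraic inputs will be the pointwise identities $Z(fg) = Z(f) \cup Z(g)$ and $Z(f^2 + g^2) = Z(f) \cap Z(g)$, valid for any real-valued $f$ and $g$, together with the unit characterisation from Theorem \ref{t2.3}, which identifies the units of $T''(X)$ with functions having empty zero set; closure of $T''(X)$ under the necessary ring operations is provided by Theorem \ref{t2.1}.

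For part (1), since $I$ is proper it contains no unit, so $\emptyset \notin Z[I]$ by Theorem \ref{t2.3}. For upward closure, given $f \in I$ and $Z(g) \in Z''[X]$ with $Z(f) \subseteq Z(g)$, the product $fg$ lies in $I$ and $Z(fg) = Z(f) \cup Z(g) = Z(g)$, so $Z(g) \in Z[I]$; closure under pairwise (and hence finite) intersection follows from $Z(f) \cap Z(g) = Z(f^2 + g^2)$ with $f^2 + g^2 \in I$. For part (2), given $f, g \in Z^{-1}(\mathfrak{F})$ and $h \in T''(X)$, the inclusions $Z(f) \cap Z(g) \subseteq Z(f+g)$ and $Z(f) \subseteq Z(fh)$ combined with upward closure of $\mathfrak{F}$ yield $f + g, fh \in Z^{-1}(\mathfrak{F})$; properness is immediate, as $Z(\boldsymbol{1}) = \emptyset \notin \mathfrak{F}$ forces $\boldsymbol{1} \notin Z^{-1}(\mathfrak{F})$.

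For parts (3) and (4), I would exploit that the maps $I \mapsto Z[I]$ and $\mathfrak{F} \mapsto Z^{-1}(\mathfrak{F})$ are inclusion-preserving, together with the identity $Z[Z^{-1}(\mathfrak{F})] = \mathfrak{F}$ for every $\mathfrak{F} \subseteq Z''[X]$ (each element of $\mathfrak{F}$ is, by definition of $Z''[X]$, the zero set of some function in $T''(X)$). Given a maximal ideal $M$ and a $z''$-filter $\mathfrak{F} \supseteq Z[M]$, part (2) makes $Z^{-1}(\mathfrak{F})$ a proper ideal, which visibly contains $M$ and therefore equals $M$ by maximality; applying $Z[\cdot]$ then gives $\mathfrak{F} = Z[M]$, establishing (3). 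Dually, given a $z''$-ultrafilter $\mathfrak{U}$ and a proper ideal $I \supseteq Z^{-1}(\mathfrak{U})$, part (1) produces a $z''$-filter $Z[I] \supseteq Z[Z^{-1}(\mathfrak{U})] = \mathfrak{U}$, so $Z[I] = \mathfrak{U}$ by maximality of $\mathfrak{U}$, and $f \in I$ yields $Z(f) \in Z[I] = \mathfrak{U}$, i.e., $I \subseteq Z^{-1}(\mathfrak{U})$; combined with the reverse inclusion this gives (4).

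The main obstacle, such as it is, is not conceptual but rather a matter of verifying that each constructed function ($fg$, $f^2 + g^2$, $f + g$, $fh$) genuinely lies in $T''(X)$ and that the zero-set identities familiar from the $C(X)$ setting carry over. Both are routine given Theorem \ref{t2.1} and the fact that the identities hold for arbitrary real-valued functions, so no new technique beyond the Gillman--Jerison template should be required.
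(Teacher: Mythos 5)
Your proposal is correct and is precisely the route the paper intends: the authors give no written proof, stating only that the result follows by closely imitating Theorems 2.3 and 2.5 of Gillman--Jerison, which is exactly the template you carry out (units via Theorem \ref{t2.3}, the identities $Z(fg)=Z(f)\cup Z(g)$ and $Z(f^2+g^2)=Z(f)\cap Z(g)$, and the Galois-type correspondence $Z[\,\cdot\,]$, $Z^{-1}(\,\cdot\,)$ with $Z[Z^{-1}(\mathfrak{F})]=\mathfrak{F}$ for parts (3) and (4)). No gaps.
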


%NEW_SECTION_NEW_SECTION_NEW_SECTION_NEW_SECTION
%NEW_SECTION_NEW_SECTION_NEW_SECTION_NEW_SECTION
%NEW_SECTION_NEW_SECTION_NEW_SECTION_NEW_SECTION
%NEW_SECTION_NEW_SECTION_NEW_SECTION_NEW_SECTION
%NEW_SECTION_NEW_SECTION_NEW_SECTION_NEW_SECTION
%NEW_SECTION_NEW_SECTION_NEW_SECTION_NEW_SECTION
%NEW_SECTION_NEW_SECTION_NEW_SECTION_NEW_SECTION
%NEW_SECTION_NEW_SECTION_NEW_SECTION_NEW_SECTION

\section{Algebraic properties of $T''(X)$}

Our intent in this section is to relate topological properties of $X$ with certain algebraic properties of the ring $T''(X)$. We start by discussing the nature of $z$-ideals of $T''(X)$. It is evident from the definition of $z$-ideal of a ring $R$ that each maximal ideal of $R$ is a $z$-ideal of $R$. It is well known that an ideal $I$ of $C(X)$ is a $z$-ideal if and only if whenever $Z(f)=Z(g)$ with $f\in C(X)$ and $g\in I$, $f\in I$. Naturally we question if there can be a similar characterisation for $z$-ideals of $T''(X)$. We answer this question in the affirmative.

\begin{theorem} \label{t5.01}
	An ideal $I$ of $T''(X)$ is a $z$-ideal if and only if whenever $Z(f)=Z(g)$ with $f\in T''(X)$ and $g\in I$, $f\in I$.
\end{theorem}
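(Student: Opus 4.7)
The plan is to split the proof into the two directions, each resting on a structural description of maximal ideals of $T''(X)$.

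For the forward direction, suppose $I$ is a $z$-ideal and $g\in I$, $f\in T''(X)$ satisfy $Z(f)=Z(g)$. My strategy is to show that maximal ideals themselves already enjoy the stated zero-set property, from which $f\in M(g)\subseteq I$ will follow. Let $M$ be any maximal ideal of $T''(X)$ with $g\in M$. By Proposition \ref{p2.1}(3), $Z[M]$ is a $z''$-ultrafilter, and by Proposition \ref{p2.1}(4), $Z^{-1}(Z[M])$ is then a maximal ideal. Since $M\subseteq Z^{-1}(Z[M])$ trivially, maximality forces $M=Z^{-1}(Z[M])$. As $Z(f)=Z(g)\in Z[M]$, we conclude $f\in Z^{-1}(Z[M])=M$. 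Since this holds for every maximal ideal containing $g$, we get $f\in M(g)\subseteq I$, where the last inclusion uses that $I$ is a $z$-ideal.

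For the converse, assume the stated zero-set property, and let $a\in I$, $b\in M(a)$. The crucial step is to show $Z(a)\subseteq Z(b)$; once this is in hand, $Z(ab)=Z(a)\cup Z(b)=Z(b)$ and $ab\in I$ give $b\in I$ upon applying the hypothesis to the pair $(b,ab)$. To prove the containment I would use the ``fixed'' maximal ideals $M_{x_0}=\{h\in T''(X):h(x_0)=0\}$ for $x_0\in X$. The evaluation $\mathrm{ev}_{x_0}\colon T''(X)\to\mathbb{R}$, $h\mapsto h(x_0)$, is a surjective ring homomorphism (surjectivity is witnessed by constant functions, which lie in $T''(X)$), so its kernel $M_{x_0}$ is a maximal ideal with $T''(X)/M_{x_0}\cong\mathbb{R}$. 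If some $x_0\in Z(a)$ had $b(x_0)\neq 0$, then $a\in M_{x_0}$ while $b\notin M_{x_0}$, contradicting $b\in M(a)$. Hence $Z(a)\subseteq Z(b)$, completing the argument.

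The principal obstacle is ensuring enough maximal ideals to witness membership in $Z(b)$ point by point; this is handled cleanly by the point-evaluation ideals $M_{x_0}$, which exist for any topological space without further separation hypotheses. The forward direction, in contrast, is essentially a bookkeeping exercise once Proposition \ref{p2.1} is invoked: it shows that every maximal ideal of $T''(X)$ is itself an ideal with the zero-set property described in the theorem, and the $z$-ideal hypothesis then transfers this property to $I$.
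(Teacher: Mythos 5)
Your proof is correct and follows essentially the same route as the paper: the forward direction via the identity $Z^{-1}(Z[M])=M$ for maximal ideals $M$ containing $g$, and the converse via the fixed maximal ideals $M_{x_0}$ to establish $Z(a)\subseteq Z(b)$ for $b\in M(a)$ (which is precisely the paper's Lemma \ref{l5.0}), followed by the substitution $Z(b)=Z(ab)$ with $ab\in I$. The only difference is that you spell out why $M_{x_0}$ is maximal (via the evaluation homomorphism onto $\mathbb{R}$), a point the paper takes for granted.
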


To prove this theorem we need to rely on the following lemma.
\begin{lemma} \label{l5.0}
	If $f\in M(g)$, then $Z(g)\subseteq Z(f)$.
\end{lemma}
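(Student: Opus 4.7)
The plan is to prove the lemma by contrapositive: I assume $Z(g)\not\subseteq Z(f)$ and exhibit a maximal ideal of $T''(X)$ that contains $g$ but not $f$, which forces $f\notin M(g)$. Concretely, I would pick a point $x_0\in Z(g)\setminus Z(f)$, so that $g(x_0)=0$ while $f(x_0)\neq 0$, and then work with the evaluation homomorphism at $x_0$.

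The key object will be the ``fixed point'' ideal $M_{x_0}:=\{h\in T''(X)\colon h(x_0)=0\}$. The verification splits into three easy checks. First, $M_{x_0}$ is an ideal of $T''(X)$ because addition and multiplication in $T''(X)$ are pointwise (as recorded in Theorem \ref{t2.1}), so the evaluation map $\phi_{x_0}\colon T''(X)\longrightarrow \mathbb{R}$ defined by $\phi_{x_0}(h)=h(x_0)$ is a ring homomorphism. Second, $\phi_{x_0}$ is surjective: $C(X)\subseteq T''(X)$ contains every real constant, so every element of $\mathbb{R}$ is hit. Therefore $T''(X)/M_{x_0}\cong \mathbb{R}$ is a field and $M_{x_0}$ is maximal. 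Third, by construction $g\in M_{x_0}$ (since $g(x_0)=0$) whereas $f\notin M_{x_0}$ (since $f(x_0)\neq 0$).

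Putting this together, $M_{x_0}$ is a maximal ideal of $T''(X)$ containing $g$ but not $f$, so $f$ lies outside the intersection $M(g)$ of all maximal ideals containing $g$. This contradicts the hypothesis $f\in M(g)$, completing the contrapositive. I expect no real obstacle here: the whole argument rides on the fact that pointwise evaluation at any single point of $X$ is an $\mathbb{R}$-valued ring homomorphism on $T''(X)$, something that holds because the functions in $T''(X)$ are genuine real-valued functions defined on all of $X$ (even though they are required to be continuous only on some dense cozero set). The only subtlety worth stating explicitly is the surjectivity of $\phi_{x_0}$, and that follows immediately from $C(X)\subseteq T''(X)$.
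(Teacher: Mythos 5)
Your proof is correct and follows essentially the same route as the paper: both pick a point $p\in Z(g)\setminus Z(f)$ and use the maximal ideal $M_p=\{h\in T''(X)\colon h(p)=0\}$ to separate $f$ from $M(g)$. You simply supply the justification for maximality (evaluation is a surjective homomorphism onto $\mathbb{R}$) that the paper leaves implicit.
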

\begin{proof}
	If there exists $p\in Z(g)\setminus Z(f)$, the ideal $M_p=\{h\in T''(X)\colon h(p)=0 \}$ is a maximal ideal of $T''(X)$. It follows that $g\in M_p$ but $f\notin M_p$. Thus we have $f\notin M(g)$.
\end{proof}
We now proceed to the proof of Theorem \ref{t5.01}.
\begin{proof}[Proof of Theorem \ref{t5.01}]
	Suppose $I$ is a $z$-ideal of $T''(X)$ and $f\in T''(X)$, $g\in I$ with $Z(f)=Z(g)$. Let $M$ be an arbitrary maximal ideal of $T''(X)$ containing $g$. To show that $f\in I$, it is enough to show that $f\in M$. See that $Z^{-1}(Z[M])$ is an ideal of $T''(X)$ containing $M$. It follows from the maximality of $M$ that $Z^{-1}(Z[M])=M$. Since $Z(f)=Z(g)\in Z[M]$, $f\in Z^{-1}(Z[M])=M$.

	Conversely assume that whenever $Z(f)=Z(g)$ with $f\in T''(X)$ and $g\in I$, $f\in I$. Suppose $g\in I$ and $f\in M(g)$. Then it follows from Lemma \ref{l5.0} that $Z(g)\subseteq Z(f)$. Thus $Z(f)=Z(fg)$ and $fg\in I$. Therefore it follows from the hypothesis that $f\in I$ and hence $M(g)\subseteq I$.
\end{proof}

The following corollary is immediate.
\begin{corollary} \label{c5.2}
	An ideal $I$ of $T''(X)$ is a $z$-ideal if and only if whenever $Z(f)\supseteq Z(g)$ with $f\in T''(X)$ and $g\in I$ implies $f\in I$.
\end{corollary}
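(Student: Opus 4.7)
The plan is to deduce the corollary from Theorem \ref{t5.01} using a simple product trick that reduces the containment condition $Z(f)\supseteq Z(g)$ to the equality condition.

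For the forward direction, I would assume $I$ is a $z$-ideal and take $f\in T''(X)$, $g\in I$ with $Z(f)\supseteq Z(g)$. The key observation is to consider the product $fg$. Since $T''(X)$ is a ring (Theorem \ref{t2.1}), $fg\in T''(X)$, and since $I$ is an ideal with $g\in I$, we have $fg\in I$. Moreover, $Z(fg)=Z(f)\cup Z(g)=Z(f)$ because $Z(g)\subseteq Z(f)$ by hypothesis. Now Theorem \ref{t5.01} applies with $f\in T''(X)$ and $fg\in I$ satisfying $Z(f)=Z(fg)$, yielding $f\in I$.

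For the converse direction, the implication is immediate: if the condition holds for all $f,g$ with $Z(f)\supseteq Z(g)$, it holds in particular when $Z(f)=Z(g)$, so the criterion of Theorem \ref{t5.01} is met, forcing $I$ to be a $z$-ideal.

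No real obstacle is expected, as the whole argument rests on the elementary identity $Z(fg)=Z(f)\cup Z(g)$ and a direct appeal to the preceding theorem; the corollary is essentially a cosmetic strengthening that replaces ``$=$'' by ``$\supseteq$'' in the zero-set criterion.
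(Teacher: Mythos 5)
Your proof is correct and is exactly the argument the paper intends: the corollary is stated as ``immediate'' after Theorem \ref{t5.01}, and the product trick $Z(fg)=Z(f)\cup Z(g)=Z(f)$ when $Z(g)\subseteq Z(f)$ is precisely the device already used in the paper's own proof of the converse direction of that theorem. Nothing is missing.
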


Next we discuss the structure of non-zero minimal ideals of the ring $T''(X)$. We are able to identify exactly the collection of minimal ideals of $T''(X)$, for a Tychonoff space $X$.
\begin{theorem} \label{t5.0}
	Let $X$ be a Tychonoff space. A non-zero minimal ideal, $I$ of $T''(X)$ is generated by an idempotent $e$ such that $X\setminus Z(e)$ contains exactly one element.
\end{theorem}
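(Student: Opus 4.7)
The plan is to start with an arbitrary nonzero element $f \in I$, use the minimality of $I$ to pin down the shape of $f$, and then extract the idempotent. Since $T''(X)$ has unity, the principal ideal $(f) = fT''(X)$ is a nonzero sub-ideal of $I$, so minimality immediately yields $I = (f)$.

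The crucial step is to show that $coz(f)$ is a singleton. Suppose, for contradiction, that it contained two distinct points $p$ and $q$. Since $X$ is Tychonoff (in particular $T_1$) and $\{p\}$ is closed, there exists $g \in C(X) \subseteq T''(X)$ with $g(p) = 0$ and $g(q) = 1$. Then $fg \in T''(X)$ is nonzero, as $(fg)(q) = f(q) \neq 0$, so $(fg)$ is a nonzero sub-ideal of $I = (f)$. However, every element of $(fg)$ vanishes at $p$, whereas $f(p) \neq 0$; hence $f \notin (fg)$ and $(fg) \subsetneq I$, contradicting the minimality of $I$. This separation argument is where the Tychonoff hypothesis becomes indispensable and is the main obstacle to anticipate; the surrounding algebraic bookkeeping with principal ideals is routine once the separating function $g$ is available.

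Once $coz(f) = \{p\}$ has been forced, we have $f = f(p)\chi_{\{p\}}$ pointwise on $X$, and since $f \in T''(X)$ with $f(p) \neq 0$ a scalar, the element $e := \chi_{\{p\}} = f(p)^{-1} f$ also lies in $T''(X)$ and is manifestly idempotent. The mutual relations $f = f(p) \cdot e$ and $e = f(p)^{-1} \cdot f$ give $(e) = (f) = I$, while $X \setminus Z(e) = \{p\}$ has exactly one element, as required.
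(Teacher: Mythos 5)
Your proof is correct, and it takes a genuinely different (and cleaner) route than the paper's. The paper first invokes the standard fact that a nonzero minimal ideal of a reduced ring is generated by an idempotent $e$, and then rules out $|X\setminus Z(e)|\geq 2$ by a two-case analysis according to whether the two offending points lie in the dense cozero set $G$ on which $e$ is continuous: in one case it builds a separating function on the subspace $G$ and extends it by hand to an element $\widehat{g}=e\widehat{g}$ of $I$; in the other it observes that $\chi_{\{x\}}$ already lies in $T''(X)$ and gives a proper nonzero sub-ideal. You avoid both the reduced-ring lemma and the case split by working with an arbitrary nonzero $f\in I$ and a globally defined separating function $g\in C(X)$ with $g(p)=0$, $g(q)=1$: every element of $(fg)$ vanishes at $p$ while $f$ does not, so $(fg)$ is a nonzero ideal properly contained in $I$, contradicting minimality; the idempotent then falls out at the end as $e=\chi_{\{p\}}=f(p)^{-1}f$ rather than being posited in advance. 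Both arguments use the Tychonoff hypothesis only to separate a point from a disjoint closed set by a continuous function. Your version is shorter and more self-contained; the paper's version exhibits the idempotent generator from the outset via general ring theory, at the cost of the extra bookkeeping with the set $G$.
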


\begin{proof}
	Since $T''(X)$ is a reduced ring, $I$ is generated by an idempotent $e$.
	If possible let $X\setminus Z(e)$ contain two distinct elements $x$ and $y$ in $X$. There exists $f\in C(X)$ such that $G=X\setminus Z(f)$ is dense in $X$ and $e$ is continuous on $G$. To achieve a contradiction we split the proof into two cases.
	
	\begin{itemize}
		\item[Case 1:] Let $x,y\in G$. Since $X$ is Tychonoff space, there exists $g\in C(G)$ such that $g(Z(e|_G)\cup \{x\})=\{0\}$ and $g(y)=1$. Define $\widehat{g}\colon X\longrightarrow \mathbb{R}$ as \[\widehat{g}(t)=\begin{cases}
			g(t) &\text{ if }t\in G=X\setminus Z(f) \\
			0 &\text{ if }t\in Z(f)\cap Z(e) \\
			1 &\text{ otherwise}
		\end{cases}. \] Then $\widehat{g}$ is continuous on $G$ and hence $\widehat{g}\in T''(X)$. It follows that $\widehat{g}=e\widehat{g}\in I\setminus \{0\}$. Hence by the minimality of $I$, we get $I=\langle \widehat{g} \rangle$ and thus $e=\widehat{g}h$ for some $h\in T''(X)$ which contradicts that $e(x)$ is non-zero.
		
		\item[Case 2:] At least one of $x$ and $y$ is not in $G$. Without loss of generality assume that $x\notin G$, that is $x\in Z(f)$ where $f\in C(X)$ and $int\;Z(f)=\emptyset$. This ensures that $\chi_{\{x\}}\in T''(X)$ as $\chi_{\{x\}}$ is continuous on $G$. Furthermore $\chi_{\{x\}}\in I\setminus \{\boldsymbol{0}\}$. It follows that $\{\boldsymbol{0}\}\subsetneqq \langle \chi_{\{x\}}\rangle \subsetneqq I $ which contradicts the minimality of $I$.
	\end{itemize}
	
\end{proof}

When discussing minimal ideals, the concept of the socle often arises. Naturally we aim to describe the socle of the ring $T''(X)$. We recall that the socle, $Soc(C(X))$ of $C(X)$ consists of all such functions that are non-zero atmost on a finite set. The following theorem shows that the socle of $T''(X)$ also enjoys such a description for a Tychonoff space $X$.

\begin{theorem} \label{t5.2}
	For a Tychonoff space $X$, the socle $Soc(T''(X))$ of the ring $T''(X)$ consists of such functions in $T''(X)$ that are non-zero atmost on a finite set.
\end{theorem}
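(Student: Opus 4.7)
The plan is to leverage Theorem \ref{t5.0} to reduce the socle question to one about characteristic functions $\chi_{\{x\}}$ lying in $T''(X)$. Since any idempotent in the reduced ring $T''(X)$ is $\{0,1\}$-valued, the minimal-ideal generators identified by Theorem \ref{t5.0} are precisely the characteristic functions $\chi_{\{x\}}$ for those $x\in X$ with $\chi_{\{x\}}\in T''(X)$. I would first record the converse: whenever $\chi_{\{x\}}\in T''(X)$, the principal ideal $\langle\chi_{\{x\}}\rangle$ is itself minimal, because any non-zero $g=h\,\chi_{\{x\}}$ in it satisfies $h(x)\neq 0$, and so $\chi_{\{x\}}=h(x)^{-1}g$ lies in every non-zero sub-ideal. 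Hence $Soc(T''(X))$ is exactly the sum $\sum\{\langle\chi_{\{x\}}\rangle : x\in X,\ \chi_{\{x\}}\in T''(X)\}$.

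The forward inclusion then follows from the definition of an ideal sum: each $f\in Soc(T''(X))$ is a finite sum $\sum_{i=1}^{n} h_i\chi_{\{x_i\}}$ with $h_i\in T''(X)$, and such an $f$ is automatically zero off the finite set $\{x_1,\ldots,x_n\}$.

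For the reverse inclusion, take $f\in T''(X)$ non-zero on a finite set $F=\{x_1,\ldots,x_n\}$. The decisive step is to show that $\chi_{\{x_i\}}\in T''(X)$ for every $i$; granted this, the decomposition $f=\sum_{i=1}^{n}f(x_i)\chi_{\{x_i\}}$ exhibits $f$ as a member of $\sum_i\langle\chi_{\{x_i\}}\rangle\subseteq Soc(T''(X))$. To verify $\chi_{\{x_i\}}\in T''(X)$, choose a dense cozero set $G$ on which $f$ is continuous. If $x_i\notin G$, then $\chi_{\{x_i\}}$ is identically $0$ on $G$ and is trivially continuous there. If $x_i\in G$, then since $f(x_i)\neq 0$ and $f$ vanishes on $G\setminus F$, continuity of $f|_G$ at $x_i$ combined with the finiteness of $F$ forces a neighborhood of $x_i$ in $G$ to meet $F$ only in $x_i$ while being mapped by $f|_G$ into a neighborhood of $f(x_i)$ avoiding $0$; this forces $\{x_i\}$ to be open in $G$, whence $\chi_{\{x_i\}}|_G$ is continuous and again $\chi_{\{x_i\}}\in T''(X)$.

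The main obstacle is the last sub-case: transferring the analytic datum ``$f|_G$ is continuous at $x_i$, $f(x_i)\neq 0$, and $f=0$ on $G\setminus F$'' into the topological conclusion ``$\{x_i\}$ is open in $G$''. Once this isolation step is handled cleanly (relying on $F$ being finite, so the other $x_j$'s can be excluded from a sufficiently small neighborhood of $x_i$), the remainder of the argument is a routine assembly of minimal-ideal decompositions; the Tychonoff hypothesis enters only through its use in Theorem \ref{t5.0}.
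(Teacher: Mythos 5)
Your proposal is correct and follows essentially the same route as the paper: the forward inclusion comes from Theorem \ref{t5.0} (minimal ideals are generated by $\chi_{\{x\}}$), and the reverse inclusion comes from showing $\chi_{\{x_i\}}\in T''(X)$ for each point of the support and writing $f=\sum_i f(x_i)\chi_{\{x_i\}}$. The only cosmetic difference is that you split on whether $x_i$ lies in the dense cozero set $G$ (deducing in the affirmative case that $x_i$ is isolated), whereas the paper splits on whether $x_i$ is isolated (deducing in the negative case that $x_i\notin G$); these are the same argument read in opposite directions.
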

\begin{proof}
	Let $f\in Soc(T''(X))$. Then $\displaystyle{f=\sum_{i=1}^n f_i}$ for some $n\in \mathbb{N}$, where $f_i\in I_i$ and $I_i$ is a minimal ideal of $T''(X)$, for $i=1,2,\cdots,n$. By Theorem \ref{t5.0}, each $f_i$ is zero everywhere on $X$ except (atmost) a single point $x_i$, $i=1,2,\cdots, n$. Thus $f$ is zero everywhere on $X$ except (atmost) on the finite set $\{x_1,x_2,\cdots, x_n \}$.
	
	Conversely, let $f\in T''(X)$ be such that $X\setminus Z(f)=\{x_1,x_2,\cdots, x_n \}$. If  $x_i$ is an isolated point in $X$ for some $i\in \{1,2,\cdots, n \}$, it follows that $\chi_{\{x_i\}}\in C(X)\subseteq T''(X)$. Now let $x_k$ be a non-isolated point in $X$ for some $k\in \{1,2,\cdots,n\}$. From the definition of $T''(X)$, it follows that there exists $h\in C(X)$ such that $X\setminus Z(h)$ is dense in $X$ and $f$ is continuous on $X\setminus Z(h)$. Since $x_k$ is non-isolated in $X$, $f$ is not continuous on $x_k$. This ensures that $x_k\in Z(h)$ and therefore $\chi_{\{x_k\}}$ is continuous on $X\setminus Z(h)$, which is dense in $X$. Hence $\chi_{\{x_k\}}\in T''(X)$. Thus $\chi_{\{x_i\}}\in T''(X)$ for all $i=1,2,\cdots, n$ and the ideals of the form $\langle \chi_{\{x_i\}}\rangle$ are minimal ideals of $T''(X)$. Finally we have $\displaystyle{f=\sum_{i=1}^nf\chi_{\{x_i\}}\in Soc(T''(X)) }$.
\end{proof}

\begin{remark}
	It is important here to question the validity of  Theorem \ref{t5.0} and Theorem \ref{t5.2} when we drop the condition that $X$ is a Tychonoff space. It has been noted in Theorem \ref{t4.1} that the aforementioned theorems are true for a nowhere almost $P$-space. So this question can be reduced as follows: Does there exist a nowhere almost $P$-space which is not Tychonoff?
\end{remark}

We terminate this section with the discussion of $T''(X)$ as a Noetherian and/or Artinian ring.

\begin{theorem} \label{t5.1}
	Let $X$ be a Tychonoff space. 
	Then the following statements are equivalent.
	\begin{enumerate}
		\item $X$ is finite.
		\item $T''(X)$ is Noetherian.
		\item $T''(X)$ is Artinian.
	\end{enumerate}
\end{theorem}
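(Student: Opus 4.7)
The plan is to establish the cyclic chain of implications $(1) \Rightarrow (3) \Rightarrow (2) \Rightarrow (1)$. The implication $(1) \Rightarrow (3)$ is immediate: a finite Tychonoff space is discrete, so $T''(X) = \mathbb{R}^X$ is a finite direct product of copies of $\mathbb{R}$ and in particular Artinian. The implication $(3) \Rightarrow (2)$ is the Akizuki--Hopkins--Levitzki theorem: every commutative Artinian ring is Noetherian.

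The substantive step is $(2) \Rightarrow (1)$, which I argue contrapositively. Assuming $X$ is infinite, I plan to exhibit a strictly ascending chain of ideals in $T''(X)$. Two observations drive the argument. First, $T''(X)$ is reduced, being a subring of $\mathbb{R}^X$. Second (to be proved below), any infinite Tychonoff space admits a countable pairwise disjoint family $\{U_n\}_{n\in\mathbb{N}}$ of nonempty cozero sets. Granting the latter, pick $f_n \in C(X) \subseteq T''(X)$ with $coz(f_n) = U_n$; then $f_i f_j = 0$ whenever $i \neq j$. The chain $\langle f_1 \rangle \subseteq \langle f_1, f_2 \rangle \subseteq \cdots$ is strict: if $f_{n+1} = \sum_{i=1}^n g_i f_i$, then multiplying both sides by $f_{n+1}$ and using orthogonality gives $f_{n+1}^2 = 0$, forcing $f_{n+1} = 0$ by reducedness and contradicting $U_{n+1} \neq \emptyset$.

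For the topological claim, I would split into two cases. If $X$ has infinitely many isolated points $p_1, p_2, \ldots$, each singleton $\{p_n\}$ is itself a nonempty cozero set. Otherwise I pick a non-isolated point $p \in X$ and build the $U_n$ by induction on cozero neighbourhoods of $p$: starting with $V_0 = X$, at stage $n$ I choose $x_n \in V_{n-1} \setminus \{p\}$ (possible because $p$ is non-isolated and $V_{n-1}$ is an open neighbourhood of $p$), use the Tychonoff property to separate $p$ and $x_n$ by disjoint cozero sets $A_n$ and $B_n$ in $X$, and set $U_n := V_{n-1} \cap B_n$ and $V_n := V_{n-1} \cap A_n$. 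The nesting $V_0 \supseteq V_1 \supseteq \cdots$ together with $U_n \cap V_n = \emptyset$ then forces $U_m \subseteq V_{m-1} \subseteq V_n$ to be disjoint from $U_n$ whenever $m > n$, yielding pairwise disjointness.

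The main obstacle I anticipate is precisely this inductive construction: disjointness must be maintained across all pairs of indices, not merely consecutive ones, which is why the nested sequence of cozero neighbourhoods of $p$ is essential, and why a straight application of Tychonoff separation to a listed sequence of points would not suffice. Once the topological lemma is in hand the ring-theoretic deduction is short. As a remark, $(3) \Rightarrow (1)$ also admits a direct proof without invoking Akizuki: for each $p \in X$ the evaluation $f \mapsto f(p)$ is a surjective ring homomorphism $T''(X) \to \mathbb{R}$ whose kernels $M_p$ are distinct maximal ideals for distinct points (by Tychonoff separation), so an infinite $X$ would give $T''(X)$ infinitely many maximal ideals, contradicting Artinianness.
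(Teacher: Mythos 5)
Your proof is correct, and the substantive implication $(2)\Rightarrow(1)$ is handled by a genuinely different construction from the paper's. The paper also reduces everything to Noetherianness (using, as you do, that a commutative Artinian ring with unity is Noetherian), but its witness for failure of Noetherianness is a single non-finitely-generated ideal: it invokes the fact that an infinite Hausdorff space contains a countably infinite \emph{discrete} subspace $Y$, fixes a point $1\in Y$, and takes $I=\{f\in T''(X)\colon f(1)=0 \text{ and } f(y)=0 \text{ for all but finitely many } y\in Y\}$, showing no finite subset generates $I$ by separating a leftover point of $Y$ from its complement. You instead produce a strictly ascending chain $\langle f_1\rangle\subsetneq\langle f_1,f_2\rangle\subsetneq\cdots$ from pairwise orthogonal elements supported on disjoint nonempty cozero sets, with strictness following from orthogonality plus reducedness. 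The two witnesses are cousins --- your points $x_n$ with their disjoint cozero neighbourhoods $U_n$ form exactly the kind of discrete subspace the paper quotes --- but your nested-separation construction of the $U_n$ makes that input self-contained rather than cited, and your handling of the non-consecutive disjointness via the decreasing cozero neighbourhoods $V_n$ of a non-isolated point is the right fix for the pitfall you identify. Your closing remark, that $(3)\Rightarrow(1)$ also follows directly because an Artinian commutative ring has only finitely many maximal ideals while the evaluation kernels $M_p$ are pairwise distinct, is a clean shortcut the paper does not take.
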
	

\begin{proof}
	If $X$ is finite, then we have $T''(X)=C(X)=\mathbb{R}^X$ which is isomorphic to $\mathbb{R}^n$, where $n$ denotes the cardinality of $X$. This ensures that $T''(X)$ has finitely many ideals and is both Artinian and Noetherian.
	
	It follows from \cite[Proposition 6.2]{Atiyah} that a commutative ring $R$ with unity is Noetherian if and only if each ideal of $R$ is finitely generated. We shall show that if $X$ is an infinite space, then $T''(X)$ has an ideal which is not finitely generated. 
	
	Since $X$ is an infinite Hausdorff space, there exists a countably infinite discrete subspace $Y$ of $X$. Let $1$ denote a fixed point in $Y$. Define $I=\{f\in T''(X)\colon f(1)=0 \text{ and }f(y)=0 \text{ for all but finitely many }y\in Y \}$. Let $f_1,f_2,\cdots, f_k$ be $k$ functions in $I$, where $k\in \mathbb{N}$. We assert that $I$ is not generated by these $k$ functions. Indeed $\displaystyle{\bigcap_{i=1}^k Z(f_i) }$ contains all but finitely many elements in $Y$ and $1\in \displaystyle{\bigcap_{i=1}^k Z(f_i) }$. Let $\displaystyle{y_0\in\bigcap_{i=1}^k Z(f_i) \setminus \{1\} }$. Then since $Y$ is a discrete subspace of $X$, there exists an open neighbourhood $U$ of $y_0$ in $X$ such that $U\cap Y=\{y_0\}$. By our hypothesis there exists $f\in T''(X)$ such that $f(X\setminus U)=\{0\}$ and $f(y_0)=1$. It is evident that $f\in I$. But $f_i(y_0)=0$ for all $i=1,2,\cdots, k$. This ensures that $f\notin \langle f_1,f_2,\cdots, f_n\rangle$. Thus $\langle f_1,f_2,\cdots, f_n\rangle \subsetneq I$.
	
	Finally, we recall that if a commutative ring $R$ with unity is Artinian, it is necessarily Noetherian \cite[Chapter 16, Theorem 3]{Dummit}. Thus if $X$ is an infinite space, then $T''(X)$ is not Artinian either.
\end{proof}

\section{When is $T''(X)$ a Von-Neumann Regular?}

Another ring property that has been of interest in the study of rings of functions is the (Von-Neumann) regularity of such rings. We explore this property of the ring $T''(X)$ in this  section to underscore its significance.

We first observe that for a Tychonoff $P$-space $X$, $T''(X)=C(X)$ (Since a $P$-space is necessarily an almost $P$-space) and $C(X)$ is a regular ring (See 4J \cite{GJ1976}). Thus the following inference is immediate.

\begin{theorem} \label{t3.0}
	For a Tychonoff $P$-space $X$, $T''(X)$ is a regular ring.
\end{theorem}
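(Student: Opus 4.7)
The plan is to reduce the statement to two facts that are either established earlier in the paper or are classical, and then stitch them together. First I would recall the definition of a $P$-space: a Tychonoff space $X$ in which every $G_\delta$-set is open. From this definition it is immediate that a $P$-space is an almost $P$-space, because any nonempty $G_\delta$-set, being open, is its own nonempty interior. Hence the hypothesis of Theorem \ref{t2.2} is satisfied.

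Applying Theorem \ref{t2.2}, I would then conclude $T''(X) = C(X)$ as rings. So the question of regularity of $T''(X)$ collapses to the regularity of $C(X)$. At this point I would invoke the well-known result (4J of \cite{GJ1976}) that $C(X)$ is a Von-Neumann regular ring precisely when $X$ is a $P$-space; since $X$ is assumed to be a Tychonoff $P$-space, this gives that $C(X)$ is regular. Combining the identification $T''(X) = C(X)$ with the regularity of $C(X)$ yields the desired conclusion.

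There is essentially no hard step here: the statement follows by chaining Theorem \ref{t2.2} with the classical characterisation of regularity for $C(X)$. If anything, the only point requiring a sentence of explanation is the implication \emph{$P$-space $\Rightarrow$ almost $P$-space}, and this is immediate from the definitions. Thus the whole argument can be written in two or three lines.
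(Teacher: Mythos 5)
Your proposal is correct and follows exactly the paper's argument: observe that a $P$-space is an almost $P$-space, invoke Theorem \ref{t2.2} to get $T''(X)=C(X)$, and then cite 4J of \cite{GJ1976} for the regularity of $C(X)$ over a $P$-space. No differences worth noting.
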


It has been observed by M. R. Ahmadi Zand in \cite{A2010} that $T'(X)$ is a regular ring for any topological space $X$. Thus, when $T'(X)=T''(X)$, $T''(X)$ then automatically becomes a regular ring. The following result is a direct consequence of the discussions preceding Example \ref{eg2}.

\begin{theorem} \label{t3.2}
	For a perfectly normal space $X$, $T''(X)$ is a regular ring.
\end{theorem}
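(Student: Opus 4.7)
The plan is to derive this theorem as an immediate corollary of the identity $T''(X) = T'(X)$ for perfectly normal $X$, combined with the already-cited result of Ahmadi Zand \cite{A2010} that $T'(X)$ is a (Von-Neumann) regular ring for every topological space. The remark preceding Example \ref{eg2} announces $T''(X) = T'(X)$ for perfectly normal $X$ without a full proof, so the main content of the argument will be to justify that equality, and then the regularity transfers for free.

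First I would verify the nontrivial inclusion $T'(X) \subseteq T''(X)$, since the reverse inclusion $T''(X) \subseteq T'(X)$ is noted just after Theorem \ref{t2.1}. Take $f \in T'(X)$; then $f$ is continuous on some open dense set $U \subseteq X$. The key topological step is that in a perfectly normal space every open subset is a cozero set. This follows from the standard fact that in a normal space every closed $G_\delta$-set is a zero set, together with the definition of perfect normality (every closed set is $G_\delta$); applying this to $X \setminus U$ yields $U = \operatorname{coz}(g)$ for some $g \in C(X)$. Thus $f$ is continuous on a dense cozero set, giving $f \in T''(X)$.

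Having established $T''(X) = T'(X)$, the conclusion is one line: since $T'(X)$ is regular by \cite{A2010}, and regularity is an intrinsic ring-theoretic property preserved under ring equality, $T''(X)$ is regular.

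The only step requiring any real care is the assertion ``every open set in a perfectly normal space is a cozero set,'' but this is standard and in fact is the classical reformulation of perfect normality for normal spaces; it is not a genuine obstacle. In summary, the entire proof should be short: invoke perfect normality to upgrade the open dense set in the definition of $T'(X)$ to a dense cozero set, then quote Ahmadi Zand's regularity theorem for $T'(X)$.
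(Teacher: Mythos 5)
Your proposal is correct and follows exactly the paper's route: show $T'(X)\subseteq T''(X)$ for perfectly normal $X$ by upgrading the open dense set to a dense cozero set (the paper even takes the equivalence ``perfectly normal $\Leftrightarrow$ every closed set is a zero set'' as its working definition, so this step is immediate), then quote Ahmadi Zand's theorem that $T'(X)$ is always regular. No substantive difference from the paper's argument.
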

The following examples hold significant importance. 
\begin{example} \label{eg3}
	Let $X$ be a perfectly normal space which is not a $P$-space ( for example, the real numbers set $\mathbb{R}$ equipped with the usual topology). Then it follows from Theorem \ref{t3.2} that $T''(X)$ is a regular ring.
	
	It is known that any normed linear space $X$ with $|X|\geq 2$  is a perfectly normal space which is not a $P$-space. Thus $T''(X)$ is regular.
\end{example}

\begin{example} \label{eg4}
	Let $X$ be an uncountable space with a distinguished point $p$ such that each $x\in X\setminus \{p\}$ is isolated in $X$ and $U$ is a neighbourhood of $p$ if and only if $X\setminus U$ is a countable set. Then $X$ is a $P$-space such that $\{p\}$ is a closed set which is not a zero set. This ensures that $X$ is not perfectly normal \cite[Exercise 4N]{GJ1976}. Hence by Theorem \ref{t3.0}, $T''(X)$ is a regular ring.
\end{example}

\begin{remark} \label{r2}
	Examples \ref{eg3} and \ref{eg4} ensure that the converse of Theorems \ref{t3.0} and \ref{t3.2} (respectively) may not be true in general.

\end{remark}

We highlight here that in the above examples the topological spaces  are either perfectly normal spaces or  $P$-spaces. Whether there exists a space $X$ which is neither a $P$-space nor a perfectly normal space, for which $T''(X)$ is a regular ring remains an undecided problem.

\begin{remark} \label{r5}
	It is important to mention at this point that $T''(X)$ is not always a regular ring. As a matter of fact, if $X$ is a Tychonoff space which is an almost $P$-space but not a $P$-space (For example, consider $X$ to be the one-point compactification of $\mathbb{N}$), then $T''(X)=C(X)$ which is not regular (See Exercise 4J \cite{GJ1976}).
\end{remark}

Having discussed some examples and various cases regarding the regularity of $T''(X)$, we now wish to characterise spaces for which $T''(X)$ is a regular ring.

It is well known that the ring $C(X)$ is regular if and only if each zero set in $X$ is clopen in $X$ \cite[Exercise 4J]{GJ1976}. Keeping this in mind, we achieve the following result.

\begin{theorem} \label{t3.1}
	$T''(X)$ is a regular ring if and only if for each $Z\in Z''[X]$, there exists a dense cozero set $U$ in $X$ such that $Z\cap U$ is clopen in $U$.
\end{theorem}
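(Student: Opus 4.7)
The plan is to rewrite the Von-Neumann regularity of $T''(X)$ as the assertion that the idempotent $\chi_{coz(f)}$ belongs to $T''(X)$ for every $f \in T''(X)$, and then to translate this membership into the local clopenness condition on $Z(f)$ stated in the theorem. The bridge is that if $f = f^2 x$ then $e := fx$ is idempotent and equals $\chi_{coz(f)}$ pointwise, while conversely one can try to recover $x$ by setting $x = 1/f$ on $coz(f)$ and $x = 0$ on $Z(f)$.

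For the forward implication, assume $T''(X)$ is regular and take $Z \in Z''[X]$, so $Z = Z(f)$ for some $f \in T''(X)$. Choose $x \in T''(X)$ with $f = f^2 x$ and put $e = fx$; a pointwise check gives $e^2 = e$, $e(p) = 1$ for $p \in coz(f)$, and $e(p) = 0$ for $p \in Z$, so $e = \chi_{coz(f)}$. Since $e \in T''(X)$, it is continuous on some dense cozero set $U$; as $e|_U$ takes only the values $0$ and $1$, the set $Z \cap U = (e|_U)^{-1}\{0\}$ is both open and closed in $U$.

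For the converse, fix $f \in T''(X)$ and pick a dense cozero set $V$ on which $f$ is continuous. Applying the hypothesis to $Z(f) \in Z''[X]$ supplies a dense cozero set $U$ with $Z(f) \cap U$ clopen in $U$. Form $W := U \cap V$, which is again a dense cozero set (by the fact that the intersection of two dense cozero sets is a dense cozero set, already used in the proof of Theorem \ref{t2.1}). Define $x \colon X \to \mathbb{R}$ by $x(p) = 1/f(p)$ on $coz(f)$ and $x(p) = 0$ on $Z(f)$; the identity $f = f^2 x$ then holds pointwise. Since $Z(f) \cap U$ is clopen in $U$, its trace $Z(f) \cap W$ is clopen in $W$, so $W$ splits as the disjoint union $(Z(f) \cap W) \sqcup (coz(f) \cap W)$ of two clopen pieces. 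On the first piece $x$ is identically $0$, and on the second $x = 1/f$ is continuous because $f|_W$ is continuous and non-vanishing there. Hence $x|_W$ is continuous, which gives $x \in T''(X)$.

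The main obstacle is the converse, and within it the need to harmonise two a priori unrelated dense cozero sets — the one on which $f$ is known to be continuous and the one furnished by the hypothesis applied to $Z(f)$. Replacing them by the intersection $W$ is the decisive move: it simultaneously gives continuity of $f$, clopenness of $Z(f)$ inside $W$, and therefore the clopen partition on which the two natural pieces of the candidate inverse can be pasted. The remaining ingredients (the pointwise identity $f = f^2 x$, continuity of $1/f$ on an open non-vanishing set, and continuity across a clopen partition) are routine.
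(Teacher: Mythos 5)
Your proof is correct and follows essentially the same route as the paper: the forward direction extracts the idempotent $fg=\chi_{coz(f)}$ (the paper phrases this as $Z(f)=X\setminus Z(\boldsymbol{1}-fg)$), and the converse builds the same quasi-inverse $x$ equal to $1/f$ off $Z(f)$ and $0$ on it. Your only departure is the extra care in the converse of intersecting the hypothesised $U$ with a dense cozero set on which $f$ is continuous, a step the paper's proof leaves implicit but which is needed for $x$ to lie in $T''(X)$.
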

\begin{proof}
	Let $T''(X)$ be a regular ring and $Z\in Z''[X]$. Then there exists $f\in T''(X)$ such that $Z=Z(f)$. By our assumption, there exists $g\in T''(X)$ such that $f=f^2g$. Since $f,g\in T''(X)$, there exists a dense cozero set $U$ in $X$ such that $f,g$ are continuous on $U$ and $Z(f)\cap U$ is clopen in $U$ as $Z(f)=X\setminus Z(\boldsymbol{1}-fg)$.
	
	Conversely, let $f\in T''(X)$. Then by  our assumption, there exists a dense cozero set $U$ in $X$ such that $Z(f)\cap U$ is clopen in $U$. Define $g\colon X\longrightarrow \mathbb{R}$ as $\displaystyle{g(x)=\begin{cases}
			0 &\text{ if }x\in Z(f) \\
			\frac{1}{f(x)} &\text{ if }x\notin Z(f)
	\end{cases}}$. Then $g$ is continuous on $U$ and thus $g\in T''(X)$ with $f=f^2g$. Hence $T''(X)$ is  regular.
\end{proof}

For a Tychonoff space $X$, $C(X)$ is regular if and only if each prime ideal in $C(X)$ is maximal if and only if every principal ideal is generated by an idempotent. A number of such characterisations for $C(X)$ to be regular are provided by Gillman and Jerison in \cite[Exercise 4J]{GJ1976}. We provide analogous characterisations for $T''(X)$ to be a regular ring.

\begin{theorem} \label{t3.3}
	For a topological space $X$, the following statements are equivalent.
	\begin{enumerate}
		\item \label{p1} $T''(X)$ is a regular ring.
		\item \label{p2} Every prime ideal of $T''(X)$ is maximal.
		\item \label{p3} For $f\in T''(X)$, there exists $g\in T''(X)$ with $Z(f)=X\setminus Z(g)$.
		\item \label{p4} For each $Z\in Z''[X]$, there exists a dense cozero set $U$ in $X$ such that $Z\cap U$ is clopen in $U$.
		\item \label{p5} Every ideal of $T''(X)$ is a $z$-ideal of $T''(X)$.
		\item \label{p6} Every ideal of $T''(X)$ is an intersection of prime ideals in it.
		\item \label{p7} Every ideal of $T''(X)$ is an intersection of maximal ideals in it. 		
		\item \label{p8} For every $f,g\in T''(X)$, $\langle f,g\rangle=\langle f^2+g^2\rangle $.	
		
		\item \label{p9} Every principal ideal in $T''(X)$ is generated by an idempotent.
		
	\end{enumerate}
\end{theorem}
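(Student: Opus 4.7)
The plan is to take $(1)$ as the hub and build the remaining equivalences around it, with $(1) \iff (4)$ already supplied by Theorem \ref{t3.1}. Since $T''(X)$ is a reduced subring of $\mathbb{R}^X$ and each evaluation ideal $M_p = \{h \in T''(X) \colon h(p) = 0\}$ is maximal, the classical commutative-algebra characterisations of Von-Neumann regularity become available as soon as the function-theoretic pieces are in place. I would split the remaining work into an idempotent cluster $\{(3),(8),(9)\}$, the $z$-ideal statement $(5)$, and an ideal-theoretic cluster $\{(2),(6),(7)\}$.

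For the idempotent cluster, $(1) \Rightarrow (9)$ is the observation that $f = f^2g$ makes $e := fg$ an idempotent with $\langle f \rangle = \langle e \rangle$; conversely $\langle f \rangle = \langle e \rangle$ with $e^2 = e$ forces $f = ef = f^2 s$ after writing $e = fs$. For $(1) \Rightarrow (3)$ I would set $h = \boldsymbol{1} - fg$: then $fh = 0$ gives $coz(f) \subseteq Z(h)$ while $h(x) = 1$ on $Z(f)$ gives $Z(h) \subseteq coz(f)$, yielding $Z(f) = X \setminus Z(h)$. For the reverse $(3) \Rightarrow (1)$, given $g \in T''(X)$ with $Z(f) = coz(g)$, define $h = 1/f$ on $coz(f)$ and $h = 0$ on $Z(f)$; on a dense cozero set $U$ on which both $f$ and $g$ are continuous (finite intersections of dense cozero sets are dense cozero sets by Theorem \ref{t2.1}), $h$ is continuous: on $coz(f) \cap U$ it is the reciprocal of the continuous non-vanishing $f$, and at any $p \in Z(f) \cap U = coz(g) \cap U$ continuity of $g$ supplies a neighbourhood $W \subseteq coz(g) = Z(f)$ of $p$ on which $h \equiv 0$. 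Thus $h \in T''(X)$ and $f = f^2 h$. For $(1) \Rightarrow (8)$, apply regularity to $h = f^2 + g^2$ to write $h = h^2 k$; then $hk$ is the idempotent equal to $\boldsymbol{1}$ on $coz(h) = coz(f) \cup coz(g)$ and $\boldsymbol{0}$ on $Z(h)$, whence $f = f(hk) = (fk)h \in \langle h \rangle$ and similarly $g \in \langle h \rangle$. Conversely, applying $(8)$ with $g = f$ gives $\langle f \rangle = \langle 2f^2 \rangle = \langle f^2 \rangle$, so $f \in \langle f^2 \rangle$.

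For $(5)$, Theorem \ref{t5.01} is the right hook: if $\langle f^2 \rangle$ is a $z$-ideal, the equality $Z(f) = Z(f^2)$ forces $f \in \langle f^2 \rangle$, establishing $(5) \Rightarrow (1)$. Conversely, if $T''(X)$ is regular and $g \in I$, $Z(f) = Z(g)$, write $g = g^2 k$ and set $e = gk$; then $e^2 = e$, $Z(e) = Z(g) = Z(f)$, so $fe = f$ and $f = g(fk) \in \langle g \rangle \subseteq I$, and Theorem \ref{t5.01} yields that $I$ is a $z$-ideal. For the ideal-theoretic cluster, $(1) \Rightarrow (2)$ is the standard fact that every prime of a Von-Neumann regular ring is maximal, and $(2) \Rightarrow (1)$ is the converse fact that a reduced commutative ring of Krull dimension zero is regular. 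For $(1) \Rightarrow (7)$, since $T''(X)/I$ inherits regularity it has trivial Jacobson radical, so $I$ equals the intersection of the maximals above it; $(7) \Rightarrow (6)$ is trivial, and $(6) \Rightarrow (1)$ follows because every ideal being an intersection of primes is equivalent to every ideal being radical, so $f \in \sqrt{\langle f^2 \rangle} = \langle f^2 \rangle$.

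The main technical hurdle I anticipate is the recurring verification that each function produced during these arguments---reciprocals, idempotents $hk = \chi_{coz(h)}$, and elements extracted from regularity relations---actually lies in $T''(X)$, i.e.\ is continuous on some dense cozero subset of $X$. The saving grace is precisely the one underlying Theorem \ref{t2.1}: finite intersections of dense cozero sets are dense cozero sets, so gathering witnesses for a bounded number of functions never leaves the class, and the dense-cozero bookkeeping in every step reduces to this single observation.
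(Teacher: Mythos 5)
Your proposal is correct: every implication you sketch goes through, and the overall architecture (a hub at statement (\ref{p1}), with (\ref{p1})$\Leftrightarrow$(\ref{p4}) delegated to Theorem \ref{t3.1}, (\ref{p1})$\Leftrightarrow$(\ref{p2}) via the standard fact that a reduced commutative ring is regular iff every prime is maximal, and (\ref{p5}) handled through Theorem \ref{t5.01}) matches the paper's. The differences are in a few individual spokes. For (\ref{p3})$\Rightarrow$(\ref{p1}) the paper does not argue directly; it proves (\ref{p3})$\Rightarrow$(\ref{p4}) by observing that $Z(f)\cap U=U\setminus Z(g|_U)$ is clopen in $U$ and then rides the already-established (\ref{p4})$\Leftrightarrow$(\ref{p1}), whereas you construct the quasi-inverse $h$ by hand; your route is self-contained but forces you to verify the continuity of $h$ on $U$, which works precisely because $Z(f)\cap U=coz(g)\cap U$ and $coz(f)\cap U=coz(f|_U)$ are complementary relatively open pieces of $U$ --- a point worth stating explicitly. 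For (\ref{p1})$\Rightarrow$(\ref{p8}) the paper routes through the $z$-ideal machinery (every ideal is a $z$-ideal, $Z(f^2+g^2)\subseteq Z(f)$, Corollary \ref{c5.2}), while you compute with the idempotent $hk=\chi_{coz(f^2+g^2)}$ directly; both are fine, yours being more elementary and the paper's reusing already-proved structure. For (\ref{p1})$\Rightarrow$(\ref{p7}) the paper again goes through (\ref{p6}) plus (\ref{p2}) and the Gillman--Jerison description of the intersection of primes containing an ideal as $\{a\colon a^n\in I\}$, whereas you invoke the vanishing of the Jacobson radical of the regular quotient $T''(X)/I$; this is shorter but imports a (standard) external fact the paper avoids. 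Finally, for (\ref{p8})$\Rightarrow$(\ref{p1}) the paper takes $g=\boldsymbol{0}$ while you take $g=f$; both give $\langle f\rangle=\langle f^2\rangle$. None of these divergences affects correctness, and your uniform appeal to the closure of dense cozero sets under finite intersection is exactly the bookkeeping the paper relies on throughout.
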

\begin{proof}
	It follows directly from \cite[Theorem 1.16]{Goodearl} and the fact that $T''(X)$ is a reduced ring that the first two assertions are equivalent. Also, the equivalence between the statements (\ref{p1}) and (\ref{p4}) has already been achieved in Theorem \ref{t3.1}.
	
	Suppose $T''(X)$ is a regular ring. Then for $f\in T''(X)$, there exists $g\in T''(X)$ such that $f=f^2g$ and we have $Z(f)=X\setminus Z(\boldsymbol{1}-fg)$ where $\boldsymbol{1}-fg\in T''(X)$. Furthermore if $I$ is an ideal of $T''(X)$ and $f,g\in T''(X)$ such that $f\in I$ with $Z(f)=Z(g)$. It follows that there exists $h\in T''(X)$ such that $Z(f)=X\setminus Z(\boldsymbol{1}-fh)$ and so $Z(g)=X\setminus Z(\boldsymbol{1}-fh)$. Thus we have $g(\boldsymbol{1}-fh)=\boldsymbol{0}$ which implies that $g=fgh\in I$. Therefore each ideal in $T''(X)$ is a $z$-ideal in it. To show that each ideal is the intersection of prime ideals in $T''(X)$, it is sufficient to show that an ideal $I$ of $T''(X)$ is the intersection of prime ideals in $T''(X)$ containing $I$. At this point we recall that in a commutative ring, $R$ with unity, if $I$ is an ideal of $R$, the intersection of prime ideals of $R$ containing $I$ is the collection $\{a\in R\colon a^n\in I \text{ for some }n\in \mathbb{N} \}$ \cite[Theorem 0.18]{GJ1976}. Since in a $z$-ideal $I$ of $T''(X)$, $f^n\in I$ if and only if $f\in I$ for any $n\in \mathbb{N}$, it follows that if $I$ is a $z$-ideal in $T''(X)$, $I=\{f\in T''(X)\colon f^n\in I \text{ for some }n\in \mathbb{N} \}$ and so $I$ is the intersection of prime ideals containing it. This ensures each ideal in $T''(X)$ is the intersection of prime ideals containing it. In fact, it follows from the equivalence between the first two statements that each ideal in $T''(X)$ is the intersection of maximal ideals. Now let $f,g\in T''(X)$. Then $\langle f^2+g^2\rangle \subseteq \langle f,g\rangle $. Since each ideal in $T''(X)$ is a $z$-ideal, in particular, $\langle f^2+g^2\rangle $ is a $z$-ideal. Since $Z(f^2+g^2)\subseteq Z(f)$ it follows from Corollary \ref{c5.2} that $f\in \langle f^2+g^2\rangle $. Similarly we conclude that $g\in \langle f^2+g^2\rangle $ and hence $\langle f,g\rangle \subseteq \langle f^2+g^2\rangle $. Thus for each pair $f,g$ of functions in $T''(X)$, $\langle f,g\rangle =\langle f^2+g^2\rangle $. Again let $f\in T''(X)$, our aim is to show that $\langle f\rangle $ is generated by an idempotent in $T''(X)$. There exists a $g\in T''(X)$ such that $f=f^2g$. It follows that $e=fg$ is the required idempotent in $T''(X)$. Thus (\ref{p1}) implies all other statements.
	
	Let us suppose that (\ref{p3}) holds. Then for $Z(f)\in Z''[X]$, for $f\in T''(X)$, there exists $g\in T''(X)$ such that $Z(f)=X\setminus Z(g)$. Then there exists a dense cozero set $U$ in $X$ such that $f$ and $g$ are continuous on $U$. Therefore $Z(f)\cap U=Z(f|_U)=U\setminus Z(g|_U)$ is clopen in $U$. This proves (\ref{p4}). Thus the first four statements are equivalent.
	
	Next we assume (\ref{p5}) and prove (\ref{p1}). Let $f\in T''(X)$. Then the ideal $\langle f^2\rangle $ is a $z$-ideal and since $Z(f)=Z(f^2)$, $f\in \langle f^2\rangle $ and hence there exists $g\in T''(X)$ such that $f=f^2g$.
	
	The statement (\ref{p6}) implies (\ref{p1}) can be seen as follows. Let $f\in T''(X)$. Then $I=\langle f^2\rangle $ equals the intersection of prime ideals in $T''(x)$ containing $I$. It follows from \cite[Theorem 0.18]{GJ1976} that $I=\{g\in T''(X)\colon g^n\in I\text{ for some }n\in \mathbb{N} \}$. In particular since $f^2\in I$, it follows that $f\in I$ and so there exists $g\in T''(X)$ such that $f=f^2g$.
	
	The equivalence between the statements (\ref{p6}) and (\ref{p7}) follows from the equivalence among the statements (\ref{p1}), (\ref{p2}) and (\ref{p6}) and the fact that a maximal ideal in $T''(X)$ is always prime.
	
	That (\ref{p8}) implies (\ref{p1}) follows by taking $f\in T''(X)\; and\;g=\boldsymbol{0}$ in statement (\ref{p8}).
	
	Finally to show (\ref{p9}) implies (\ref{p1}), let $f\in T''(X)$. Then there exists an idempotent $e\in T''(X)$ such that $\langle f\rangle =\langle e\rangle $. Thus there exist $g,h\in T''(X)$ such that $f=eg$ and $e=fh$. So $f=eg=e^2g$, as $e$ is an idempotent and $f=e^2g=f^2(h^2g)$. Thus $T''(X)$ is a regular ring. This completes the proof.
\end{proof}

%NEW_SECTION_NEW_SECTION_NEW_SECTION_NEW_SECTION
%NEW_SECTION_NEW_SECTION_NEW_SECTION_NEW_SECTION
%NEW_SECTION_NEW_SECTION_NEW_SECTION_NEW_SECTION
%NEW_SECTION_NEW_SECTION_NEW_SECTION_NEW_SECTION
%NEW_SECTION_NEW_SECTION_NEW_SECTION_NEW_SECTION
%NEW_SECTION_NEW_SECTION_NEW_SECTION_NEW_SECTION
%NEW_SECTION_NEW_SECTION_NEW_SECTION_NEW_SECTION
%NEW_SECTION_NEW_SECTION_NEW_SECTION_NEW_SECTION

\section{Nowhere almost $P$-Spaces} 

One can observe that the presence of the characteristic functions of singleton sets, in certain subrings of $\mathbb{R}^X$ (such as $C(X)_\mathcal{P}$, $\mathcal{M}_\circ(X,\mu)$ and $B_1(X)$) provides them with some interesting algebraic properties (See \cite{GGT2018, BAMRS2022, MBM2021, 2RM2019, DABM}). We observe that the ring $T''(X)$ does not, in general, contain all such characteristic functions. Indeed if $X$ is a non-discrete almost $P$-space, then there exists a non-isolated point $p\in X$. Now by Theorem \ref{t2.2}, $T''(X)=C(X)$ and so $\chi_{\{p\}}\notin C(X)=T''(X)$. For example, let $X=\mathbb{N}\cup \{\infty \}$ be the one point compactification of $\mathbb{N}$. Then $\chi_{\{\infty \}}\notin C(X)=T''(X)$. This motivates us to study topological spaces for which $T''(X)$ contains the characteristic functions $\chi_{\{x\}}$ for each $x\in X$.
\begin{definition}
We call a topological space $X$ a nowhere almost $P$-space if $\chi_{\{p\}}\in T''(X)$ for all $p\in X$.
\end{definition}

The significance of such a nomenclature is relevant from the next topological characterisation of nowhere almost $P$-spaces.

\begin{theorem} \label{t4.0}
A Tychonoff space $X$ is a nowhere almost $P$-space if and only if no non-isolated point in $X$ is an almost $P$-point.
\end{theorem}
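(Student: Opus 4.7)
The plan is to exploit the definition of $T''(X)$ together with the Tychonoff-space equivalence between the $G_\delta$ and zero-set formulations of ``almost $P$-point'' (noted in the introduction), so that testing membership $\chi_{\{p\}}\in T''(X)$ reduces to locating a suitable zero set containing $p$ with empty interior.

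For the forward direction, I would assume $X$ is a nowhere almost $P$-space and pick a non-isolated point $p\in X$; the goal is to produce a zero set containing $p$ with empty interior. By hypothesis $\chi_{\{p\}}\in T''(X)$, so there is some $g\in C(X)$ with $\operatorname{coz}(g)$ dense in $X$ on which $\chi_{\{p\}}$ is continuous. I would split on whether $p\in \operatorname{coz}(g)$: if it were, then continuity of $\chi_{\{p\}}$ at $p$ restricted to the open set $\operatorname{coz}(g)$ (using any $\epsilon<1$) would force $\{p\}$ to be relatively open in $\operatorname{coz}(g)$ and hence open in $X$, contradicting non-isolation. Thus $p\in Z(g)$, and since $\operatorname{coz}(g)$ is dense, $\operatorname{int}Z(g)=\emptyset$. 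So $Z(g)$ is a zero set containing $p$ with empty interior, showing $p$ is not an almost $P$-point.

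For the converse, I would take a point $p\in X$ and handle the two cases. If $p$ is isolated, then $\chi_{\{p\}}\in C(X)\subseteq T''(X)$ trivially. If $p$ is non-isolated, by hypothesis $p$ is not an almost $P$-point; invoking the Tychonoff equivalence, there is a zero set $Z(g)$ (with $g\in C(X)$) containing $p$ such that $\operatorname{int} Z(g)=\emptyset$. Then $\operatorname{coz}(g)=X\setminus Z(g)$ is a dense cozero set and $p\notin\operatorname{coz}(g)$, so $\chi_{\{p\}}$ is identically zero on $\operatorname{coz}(g)$, hence continuous there. Therefore $\chi_{\{p\}}\in T''(X)$, completing the proof.

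The argument is essentially a clean unpacking of definitions; the only mildly delicate point is the forward direction's case analysis, where one must correctly identify that continuity of $\chi_{\{p\}}$ on the open dense set $\operatorname{coz}(g)$ at a point $p\in\operatorname{coz}(g)$ forces $\{p\}$ to be relatively open there, and then use that $\operatorname{coz}(g)$ is open in $X$ to conclude $p$ is isolated in $X$. No other step should present real difficulty, and the Tychonoff hypothesis is used only to pass from an arbitrary $G_\delta$-set witness to a zero set witness in the reverse direction.
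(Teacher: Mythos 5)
Your proof is correct and follows essentially the same route as the paper's: extract the dense cozero set witnessing $\chi_{\{p\}}\in T''(X)$, rule out $p$ lying in it via non-isolation, and in the converse use the zero set with empty interior containing $p$ as the dense cozero complement. The only cosmetic difference is that you phrase the forward-direction case analysis as ``$\{p\}$ would be relatively open,'' where the paper simply notes $\chi_{\{p\}}$ is discontinuous at a non-isolated $p$; these are the same observation.
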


\begin{proof}
Let $X$ be a Tychonoff nowhere almost $P$-space and $x\in X$ be a non-isolated point in $X$. By our hypothesis $\chi_{\{x\}}\in T''(X)$ and so there exists $f\in C(X)$ such that $X\setminus Z(f)$ is dense in $X$ and $\chi_{\{x\}}$ is continuous on $X\setminus Z(f)$. Since $x\in X$ is non-isolated, $\chi_{\{x\}}$ is not continuous at $x$. Thus $X\setminus Z(f)\subseteq X\setminus \{x\}$ and hence $x\in Z(f)$. However $X\setminus Z(f)$ is dense in $X$ implies that $int$ $Z(f)=\emptyset$. Thus $x$ is not an almost $P$-point.

Conversely, let each non-isolated point in a Tychonoff space $X$ be not an almost $P$-point and choose $x\in X$. If $x$ is an isolated point then $\chi_{\{x\}}\in C(X)\subseteq T''(X)$. Assume that $x$ is a non-isolated point in $X$. Then there exists $f\in C(X)$ such that $x\in Z(f)$ but $int$ $Z(f)=\emptyset$. This ensures that $X\setminus Z(f)$ is dense in $X$ and clearly $\chi_{\{x\}}$ is continuous on $X\setminus Z(f)$. Thus $\chi_{\{x\}}\in T''(X)$.
\end{proof}

The above characterisation of nowhere almost $P$-spaces indicates that the concept of a nowhere almost $P$-space is kind of a opposite (loosely speaking) of that of an almost $P$-space. Here we must highlight that isolated points are always almost $P$-points. This brings us to the following corollary which follows directly from Theorem \ref{t4.0}.

\begin{corollary} \label{c4.1}
A Tychonoff space $X$ is both a nowhere almost $P$-space as well as an almost $P$-space if and only if $X$ is a discrete space.
\end{corollary}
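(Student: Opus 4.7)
The plan is to exploit Theorem \ref{t4.0} directly, since the corollary essentially asks: when can the conditions ``no non-isolated point is an almost $P$-point'' and ``every point is an almost $P$-point'' coexist? The obvious answer is precisely when there are no non-isolated points, i.e.\ when $X$ is discrete. So the proof is mostly bookkeeping.

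For the forward direction, I would assume $X$ is discrete and verify both properties separately. Every singleton $\{p\}$ is open, hence $\chi_{\{p\}}$ is continuous on all of $X$ and so lies in $C(X) \subseteq T''(X)$; this gives the nowhere almost $P$-space property. For the almost $P$-space property, note that a non-empty $G_\delta$-set $G$ contains some point $p$; since $\{p\}$ is open in a discrete space, $p \in \mathrm{int}\, G$, so $G$ has non-empty interior.

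For the converse, I would argue by contradiction. Suppose $X$ is both a nowhere almost $P$-space and an almost $P$-space, but $X$ is not discrete. Then $X$ has at least one non-isolated point $p$. Since $X$ is an almost $P$-space, every point of $X$ (including $p$) is an almost $P$-point by definition of almost $P$-space. But Theorem \ref{t4.0} applied to the Tychonoff nowhere almost $P$-space $X$ says that no non-isolated point of $X$ can be an almost $P$-point, contradicting that $p$ is such a point. Hence $X$ must be discrete.

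There is no real obstacle here; the only thing to be mindful of is invoking Theorem \ref{t4.0} in the right direction (the forward implication of that theorem is what powers the contradiction) and making sure the forward direction of the corollary is verified for both properties rather than taken for granted.
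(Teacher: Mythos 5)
Your proposal is correct and is essentially the argument the paper intends: the corollary is stated as following directly from Theorem \ref{t4.0} together with the observation that isolated points are always almost $P$-points, which is exactly the combination you use. Your explicit verification of the forward direction (discreteness gives both properties) and the contradiction via Theorem \ref{t4.0} for the converse fills in the same bookkeeping the paper leaves to the reader.
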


In order to make the study of nowhere almost $P$-spaces meaningful, we need some concrete examples of topological spaces which are in fact nowhere almost $P$-spaces. First we recall that a topological space $X$ is said to be of countable pseudocharacter if every singleton set is a $G_\delta$-set. Equivalently, a Tychonoff space $X$ is of countable pseudocharacter if every singleton set is a zero set.

We percieve that Tychonoff spaces with countable pseudocharacter provide us with a large class of nowhere almost $P$-spaces. This is evident from the following proposition and example.

\begin{proposition} \label{p4.1}
A Tychonoff space $X$ with countable pseudocharacter is a nowhere almost $P$-space. 
\end{proposition}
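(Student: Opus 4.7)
The plan is to use the topological characterisation established in Theorem \ref{t4.0}: it suffices to show that in a Tychonoff space $X$ of countable pseudocharacter, no non-isolated point can be an almost $P$-point. Alternatively (and essentially equivalently), I would work directly from the definition and exhibit, for each $p \in X$, an explicit dense cozero set on which $\chi_{\{p\}}$ is continuous.

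First I would dispose of isolated points: if $p \in X$ is isolated, then $\chi_{\{p\}} \in C(X) \subseteq T''(X)$ and we are done. So fix a non-isolated point $p \in X$. Since $X$ is of countable pseudocharacter, $\{p\}$ is a $G_\delta$-set; because $X$ is Tychonoff, every $G_\delta$ singleton is a zero set (this is the standard fact used implicitly in the excerpt right after the definition of countable pseudocharacter). Hence there exists $f \in C(X)$ with $Z(f) = \{p\}$.

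Now $U := \coz(f) = X \setminus \{p\}$ is a cozero set, and it is dense in $X$ precisely because $p$ is non-isolated (any open neighbourhood of $p$ contains points other than $p$, and every other point lies in $U$). On $U$ the function $\chi_{\{p\}}$ is identically zero, so it is trivially continuous on $U$. Thus $\chi_{\{p\}} \in T''(X)$, and since $p$ was arbitrary, $X$ is a nowhere almost $P$-space.

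There is no real obstacle here; the whole argument rests on the single observation that in a Tychonoff space, a $G_\delta$ singleton is a zero set, which hands us the required dense cozero set $X \setminus \{p\}$ for free. If one preferred the route through Theorem \ref{t4.0}, the same zero-set $\{p\}$ with empty interior would witness that $p$ fails to be an almost $P$-point, yielding the same conclusion.
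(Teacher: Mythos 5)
Your argument is correct and is essentially the paper's own proof: isolated points are handled via $\chi_{\{p\}}\in C(X)$, and for a non-isolated $p$ the singleton $\{p\}$ is a zero set (countable pseudocharacter plus Tychonoff), so $X\setminus\{p\}$ is a dense cozero set on which $\chi_{\{p\}}$ is trivially continuous. The only difference is that you spell out the step ``$G_\delta$ singleton in a Tychonoff space is a zero set,'' which the paper absorbs into its stated equivalent form of countable pseudocharacter.
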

\begin{proof}
Let us assume that $X$ is a Tychonoff space with countable pseudocharacter and $p\in X$. Then $\{p\}$ is a zero set in $X$. If $p$ is an isolated point, then $\chi_{\{p\}}\in C(X)\subseteq T''(X)$. Otherwise $\chi_{\{p\}}$ is continuous on $X\setminus \{p\}$ which is a dense cozero set in $X$, and so $\chi_{\{p\}}\in T''(X)$.
\end{proof}

\begin{example}
Since a perfectly normal $T_1$-space $X$ is of countable pseudocharacter, it follows that perfectly normal $T_1$-spaces are nowhere almost $P$-spaces. 

In particular, all metric spaces are nowhere almost $P$-spaces.

The space $\omega_1$ of all countable ordinals is an example of a Tychonoff space with countable pseudocharacter which is not perfectly normal.
\end{example}

We must highlight that the condition `$X$ has countable pseudocharacter' is not superflous in Proposition \ref{p4.1}. The following example illustrates this with clarity.

\begin{example} \label{eg5}
Let $\omega_1$ be the first uncountable ordinal and hence the space of all countable ordinals. We now show that the closed ordinal space, $X=\omega_1\cup \{\omega_1 \}(\equiv \omega_1+1)$ is not of countable pseudocharacter and is in turn not a nowhere almost $P$-space. 

We claim that $\{\omega_1\}$ is not a $G_\delta$-set in $X$. If not, then there exists $\{\alpha_n\colon n\in \mathbb{N} \}\subseteq \omega_1$ such that $\displaystyle{\{\omega_1 \}=\bigcap_{n\in \mathbb{N}}(\alpha_n,\omega_1]} $. Let $\alpha=\sup \{\alpha_n\colon n\in \mathbb{N} \}$. Then $\alpha<\omega_1$, since each $\alpha_n$ is countable and the countable union of countable sets is countable. Thus $\displaystyle{ (\alpha, \omega_1] \subseteq \bigcap_{n\in \mathbb{N}}(\alpha_n,\omega_1]}=\{\omega_1 \}$ which is a contradiction. Thus $X$ is not of countable pseudocharacter.

Again we claim that $\chi_{\{\omega_1 \}}\notin T''(X)$. If not then there exists a dense cozero set $Y\subseteq X\setminus \{\omega_1 \}$. Since $Y$ is dense in $X$, $X\setminus Y$ consists only of limit ordinals and $\omega_1 \in X\setminus Y$. Since $Y$ is a cozero set in $X$, there exists $f\in C(X)$ such that $Z(f)=X\setminus Y$. Therefore there exists $\alpha<\omega_1$ such that $f(x)=0$ for all $x\geq \alpha$. In particular, $f(\alpha+1)=0$. Thus $\alpha+1$ is a non-limit ordinal in $Z(f)$, which is a contradiction.
\end{example}

\begin{remark} \label{r4}
The fact that the normal space $X=\omega_1+1$ in Example \ref{eg5} is not a nowhere almost $P$-space can also be deduced using Theorem \ref{t4.0}. See that if $\omega_1\in Z(f)$ for some $f\in C(X)$, there exists an $\alpha<\omega_1$ such that $f(x)=0$ for all $x\geq \alpha$. (See \cite{GJ1976}) Thus $\{x\in X\colon x\geq \alpha \}\subseteq Z(f)$ and so $int$ $Z(f)\neq \emptyset$. Thus $\omega_1$ is a non-isolated almost $P$-point in $X$.
\end{remark}

We have certainly pondered whether the converse of Proposition \ref{p4.1} holds true. We have however neither been able to prove nor disprove the converse. This question is stated at the end of this article and is left for the readers.

A question was raised in Section 2: When is $C(X)_F$ a subring of $T''(X)$? We have established that such spaces are nothing but nowhere almost $P$-spaces.

\begin{theorem} \label{t4.2}
$C(X)_F$ is a subring of $T''(X)$ if and only if $X$ is a nowhere almost $P$-space.
\end{theorem}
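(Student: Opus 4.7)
The plan is to handle the two implications separately; the forward direction is essentially immediate from the definition of nowhere almost $P$-space, while the reverse direction needs a short construction that uses the finiteness of the discontinuity set.

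For the forward implication, I would observe that for any $p\in X$ the characteristic function $\chi_{\{p\}}$ already belongs to $C(X)_F$. Indeed, since $X$ is $T_1$, the singleton $\{p\}$ is closed, so $\chi_{\{p\}}$ is continuous on $X\setminus\{p\}$; its set of discontinuities is therefore either empty (if $p$ is isolated) or just $\{p\}$. Either way $\chi_{\{p\}}\in C(X)_F\subseteq T''(X)$, which is exactly the defining property of a nowhere almost $P$-space.

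For the reverse implication, let $f\in C(X)_F$ be continuous on $X\setminus F$ for some finite $F$. Since $f$ is automatically continuous at every isolated point of $X$, I may shrink $F$ to consist only of non-isolated points; write $F=\{x_1,\dots,x_n\}$. For each $i$, the nowhere almost $P$-space hypothesis yields $g_i\in C(X)$ such that $coz(g_i)$ is dense in $X$ and $\chi_{\{x_i\}}$ is continuous on $coz(g_i)$. The crucial point is $x_i\in Z(g_i)$: if instead $x_i\in coz(g_i)$, then since $coz(g_i)$ is open in $X$ and $x_i$ is non-isolated in $X$, every neighbourhood of $x_i$ within $coz(g_i)$ would contain points of $coz(g_i)\setminus\{x_i\}$, on which $\chi_{\{x_i\}}$ vanishes, contradicting continuity of $\chi_{\{x_i\}}$ at $x_i$ on $coz(g_i)$. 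Now set $g=g_1g_2\cdots g_n\in C(X)$; then $coz(g)=\bigcap_{i=1}^n coz(g_i)$ is a finite intersection of dense open sets and hence is itself a dense cozero set, while $F\subseteq \bigcup_{i=1}^n Z(g_i)=Z(g)$. Consequently $coz(g)\subseteq X\setminus F$, so $f$ is continuous on $coz(g)$ and therefore $f\in T''(X)$.

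The one place where I would concentrate the argument, and what I expect to be the main obstacle, is the deduction $x_i\in Z(g_i)$: the raw definition of nowhere almost $P$-space produces some witness $g_i$ without specifying where it vanishes, and it is the non-isolation of $x_i$ together with the continuity of $\chi_{\{x_i\}}|_{coz(g_i)}$ that pins down $x_i\in Z(g_i)$. The remaining ingredients—that a finite intersection of dense cozero sets is again a dense cozero set, and that restricting a continuous function to an open subset preserves continuity—are standard, and have already been invoked (implicitly) in Theorem \ref{t2.1}.
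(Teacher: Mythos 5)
Your proof is correct and follows essentially the same route as the paper: both directions hinge on the observation that non-isolation of a discontinuity point forces any dense cozero witness for $\chi_{\{x_i\}}$ to avoid that point, so the witness set lies inside $X\setminus D_f$ where $f$ is continuous. The only cosmetic difference is that the paper sums the characteristic functions into $\chi_{D_f}\in T''(X)$ and extracts a single dense cozero set from that membership, whereas you keep the individual witnesses $g_i$, pin down $x_i\in Z(g_i)$, and intersect the sets $coz(g_i)$ -- both versions reach the same conclusion.
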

\begin{proof}
Suppose $X$ is a nowhere almost $P$-space and $f\in C(X)_F$. Then the discontinuity set $D_f$ of $f$, is a finite set. Let $D_f=\{x_1,x_2,\cdots, x_n \}$, where $n\in \mathbb{N}$. Since $X$ is a nowhere almost $P$-space, $\chi_{\{x_i\}}\in T''(X)$ for all $i=1,2,\cdots,n$. Thus $\displaystyle{\chi_{_{D_f}}=\sum_{i=1}^{n}\chi_{\{x_i\}} \in T''(X)}$. Therefore there exists a dense cozero set $G$ in $X$ such that $\chi_{_{D_f}}$ is continuous on $G$. Note that each $x_i$ is a non-isolated point for $i=1,2,\cdots,n$. So $\chi_{_{D_f}}$ is discontinuous on $D_f$. This ensures that $G\subseteq X\setminus D_f$ and it follows that $f$ is continuous on $G$. Hence $f\in T''(X)$. The converse is obvious.
\end{proof}

\begin{remark} \label{r1}
We observe that if $X$ a nowhere almost $P$-space $X$, $T''(X)$ is an almost regular ring. To see this, let $f$ be a non-unit element in $T''(X)$. Then by Theorem \ref{t2.3}, there exists $p\in Z(f)$. Since $X$ is a nowhere almost $P$-space, $\chi_{\{p\}}\in T''(X)\setminus \{\boldsymbol{0} \}$ and we have $f\chi_{\{p\}}=\boldsymbol{0}$.

However, the converse of the above statement if false. Indeed if $X$  is an almost $P$-space having atleast one non-isolated point $p$ then by Theorem \ref{t2.2}, $C(X)=T''(X)$ but $\chi_{\{p\}}\notin C(X)$. Note that in this case $C(X)$ (and hence $T''(X)$) is an almost regular ring (see \cite{Levy}) even though $\chi_{\{p\}}\notin T''(X)$.
\end{remark}

Combining Theorem \ref{t2.2} and Corollary \ref{c4.1}, we can conclude that for a nowhere almost $P$-space, $C(X)=T''(X)$ if and only if $X$ is a discrete space. In fact, we get something more. The result below can be achieved by closely following the proof of Proposition 4.4 in \cite{DABM}.

\begin{proposition} \label{p4.2}
The following statements are equivalent for a nowhere almost $P$-space $X$.
\begin{enumerate}
	\item  $C(X)=T''(X)$.
	\item  $X$ is discrete.
	\item  $T''(X)$ is a ring of quotients of $C(X)$.
\end{enumerate} 
\end{proposition}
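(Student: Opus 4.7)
The plan is to prove the equivalences via the cyclic chain $(2) \Rightarrow (1) \Rightarrow (3) \Rightarrow (2)$, where the first two implications are essentially immediate and all the real content lies in the last. For $(2) \Rightarrow (1)$, if $X$ is discrete then every real-valued function on $X$ is continuous, so $C(X) = \mathbb{R}^X$, which combined with the standing containment $T''(X) \subseteq \mathbb{R}^X$ forces $C(X) = T''(X)$. For $(1) \Rightarrow (3)$, whenever $C(X) = T''(X)$ the larger ring is trivially a ring of quotients of the smaller: for any nonzero $s \in T''(X) = C(X)$, taking $r = \boldsymbol{1}$ gives $sr = s \in C(X) \setminus \{\boldsymbol{0}\}$.

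The substantive step is $(3) \Rightarrow (2)$, which I would prove by contradiction. Assume $T''(X)$ is a ring of quotients of $C(X)$ but that $X$ is not discrete, so there exists a non-isolated point $p \in X$. Since $X$ is a nowhere almost $P$-space, $\chi_{\{p\}}$ lies in $T''(X) \setminus \{\boldsymbol{0}\}$, and the ring-of-quotients hypothesis produces $f \in C(X)$ with $f \chi_{\{p\}} \in C(X) \setminus \{\boldsymbol{0}\}$. The pointwise identity $f \chi_{\{p\}} = f(p)\chi_{\{p\}}$ shows that this product can be nonzero only if $f(p) \neq 0$, in which case multiplying the resulting continuous function by the constant $1/f(p)$ yields $\chi_{\{p\}} \in C(X)$. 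But continuity of $\chi_{\{p\}}$ forces $\{p\}$ to be open in $X$, contradicting the non-isolation of $p$.

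The only real obstacle is $(3) \Rightarrow (2)$; once one recognises that the characteristic function of a non-isolated point, available to us precisely because $X$ is a nowhere almost $P$-space, is the right nonzero element of $T''(X)$ against which to test the ring-of-quotients condition, the argument collapses to the one-line pointwise computation above. This mirrors the strategy of Proposition 4.4 of \cite{DABM}, to which the statement explicitly appeals.
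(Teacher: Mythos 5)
Your proof is correct, and the key step --- testing the ring-of-quotients condition against $\chi_{\{p\}}$ for a non-isolated point $p$, which lies in $T''(X)$ precisely because $X$ is a nowhere almost $P$-space --- is exactly the argument the paper delegates to Proposition 4.4 of \cite{DABM}. Your cyclic chain $(2)\Rightarrow(1)\Rightarrow(3)\Rightarrow(2)$ is in fact slightly cleaner than the paper's own remark, which obtains $(1)\Leftrightarrow(2)$ by combining Theorem \ref{t2.2} with Corollary \ref{c4.1} and therefore implicitly leans on the Tychonoff hypothesis, whereas your argument needs no such assumption.
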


The next remark is immediate.
\begin{remark}
Proposition \ref{p4.2} is true for any Tychonoff space with countable pseudocharacter. This ensures that Proposition \ref{p4.2} is true for all perfectly normal spaces (and hence for all metric spaces).
\end{remark}

We discuss certain algebraic properties of the ring $T''(X)$ for a nowhere almost $P$-space $X$. The following results can be proved by closely tracing the steps used to prove Theorem 4.7 and Proposition 4.8 in \cite{DABM}.

\begin{theorem} \label{t4.1}
The following assertions are true for a nowhere almost $P$-space $X$.
\begin{enumerate}
	\item  A non zero ideal $I$ of $T''(X)$ is minimal if and only if there exists an $\alpha \in X$ such that $I=\langle \chi_{\{\alpha \}}\rangle $ if and only if $|\{ Z(f)\colon f\in I \}|=2$.
	\item  The socle of $T''(X)$ consists of all functions that vanish everywhere except on a finite set.
	\item $Soc(T''(X))=T''(X)$ if and only if $X$ is finite.
\end{enumerate}
\end{theorem}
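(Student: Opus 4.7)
The plan is to exploit two elementary observations that hold on a nowhere almost $P$-space $X$. First, by definition $\chi_{\{p\}} \in T''(X)$ for every $p \in X$. Second, for any $f \in T''(X)$ and $p \in X$, the pointwise product satisfies $f\chi_{\{p\}} = f(p)\chi_{\{p\}}$. Combining these, $\langle\chi_{\{p\}}\rangle = \{c\chi_{\{p\}} \colon c \in \mathbb{R}\}$ is a one-dimensional $\mathbb{R}$-subspace of $T''(X)$ and is therefore automatically a minimal ideal. This observation drives every part of the argument.

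For assertion (1) I would prove the two equivalences using $I = \langle\chi_{\{\alpha\}}\rangle$ as a common pivot. Starting from a nonzero minimal ideal $I$, pick any nonzero $f \in I$ and any $\alpha \in X$ with $f(\alpha) \neq 0$; then $f\chi_{\{\alpha\}} = f(\alpha)\chi_{\{\alpha\}}$ lies in $I$ and is nonzero, so $\chi_{\{\alpha\}} \in I$ and minimality gives $I = \langle\chi_{\{\alpha\}}\rangle$. The reverse direction is already covered by the opening observation. For the cardinality equivalence, if $I = \langle\chi_{\{\alpha\}}\rangle$ then every element has zero set either $X$ or $X \setminus \{\alpha\}$, giving $|\{Z(f) \colon f \in I\}| = 2$. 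Conversely, given this cardinality condition, pick a nonzero $f_0 \in I$ and some $\alpha \in X \setminus Z(f_0)$; the product $f_0\chi_{\{\alpha\}} \in I$ is nonzero with zero set $X \setminus \{\alpha\}$, so the cardinality constraint forces $Z(f_0) = X \setminus \{\alpha\}$, hence $f_0 = f_0(\alpha)\chi_{\{\alpha\}}$ and $\chi_{\{\alpha\}} \in I$; applying the same trick to any other element of $I$ shows it too is a scalar multiple of $\chi_{\{\alpha\}}$, completing $I = \langle\chi_{\{\alpha\}}\rangle$.

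Assertions (2) and (3) then fall out quickly. By definition the socle is the sum of all minimal ideals, which by (1) equals $\sum_{\alpha \in X}\langle\chi_{\{\alpha\}}\rangle$; elements of this sum are finite combinations $\sum_{i=1}^n c_i\chi_{\{\alpha_i\}}$, which coincide exactly with those members of $T''(X)$ that vanish outside a finite set, since any such $f$ supported on $\{\alpha_1,\dots,\alpha_n\}$ rewrites as $\sum_{i=1}^n f(\alpha_i)\chi_{\{\alpha_i\}}$. For (3), if $X$ is finite then $T''(X) = C(X) = \mathbb{R}^X$ and every function vacuously has finite support, so $Soc(T''(X)) = T''(X)$; conversely, if $Soc(T''(X)) = T''(X)$ then the unit $\boldsymbol{1}$ has finite support, which is only possible when $X$ itself is finite.

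The delicate step is the implication in (1) starting from the cardinality hypothesis: the multiplication trick with $\chi_{\{\alpha\}}$ must simultaneously insert $\chi_{\{\alpha\}}$ into $I$ and pin down $X \setminus Z(f_0)$ to be the singleton $\{\alpha\}$. Crucially, the nowhere almost $P$-space hypothesis is precisely what makes $\chi_{\{\alpha\}}$ available as an element of $T''(X)$ to carry out this multiplication; without it the entire strategy would collapse.
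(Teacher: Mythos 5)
Your proof is correct. Note that the paper does not actually prove Theorem \ref{t4.1}; it only remarks that the result ``can be proved by closely tracing the steps used to prove Theorem 4.7 and Proposition 4.8 in \cite{DABM}'', so your self-contained argument --- built on the identity $f\chi_{\{p\}}=f(p)\chi_{\{p\}}$ and the resulting one-dimensionality of $\langle \chi_{\{p\}}\rangle$ --- is exactly the natural route, and it is in effect a streamlined version of the paper's own proofs of Theorems \ref{t5.0} and \ref{t5.2}, simplified because the nowhere almost $P$ hypothesis hands you $\chi_{\{\alpha\}}\in T''(X)$ directly instead of forcing the Tychonoff case analysis used there.
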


\begin{remark} \label{r3}
It has been discussed above that a Tychonoff space with countable pseudocharacter is a nowhere almost $P$-space and a perfectly normal space is of countable pseudocharacter. We can thus conclude that Theorem \ref{t4.1} is valid for any Tychonoff space having countable pseudocharacter. In particular, the aforementioned theorem is valid for any perfectly normal space, and hence for any metric space.
\end{remark}

\section{Some open questions}

\begin{enumerate}
	
\item Can one characterise the zero divisors of the ring $T''(X)$ using the topology on $X$?
\item In light of the discussions made in Section \ref{sec2}, it remains a question as to when the ring $T''(X)$ is isomorphic to $C(Y)$ for some topologcal space $Y$?
\item We have noted that if $X$ is a perfectly normal space, then $T''(X)=T'(X)$ and we can conclude from Example \ref{eg2} that the condition  ``$X$ is perfectly normal'' is not superflous. The question remains whether there exists a topological space $X$ which is not perfectly normal but $T'(X)=T''(X)$.
\item In Theorem \ref{t4.2} we discussed when is $C(X)_F$ a subring of $T''(X)$. When is $T''(X)$ a subring of $C(X)_F$ and when is $C(X)_F=T''(X)$ are still unanswered questions.
\item Example \ref{eg6} deals with topological spaces that are all perfectly normal. We wonder if there exists a Tychonoff space $X$  which is not perfectly normal, yet satisfies the hypothesis of Theorem \ref{t2.4}?
\item Taking note of Theorems \ref{t3.0} and \ref{t3.2}, we pose the question if there exists a topological space $X$ which is neither a $P$-space nor a perfectly normal space, yet $T''(X)$ is Von-Neumann regular?
\item Is the converse of Proposition \ref{p4.1} true? In other words, does there exist a Tychonoff nowhere almost $P$-space which is not of countable pseudocharacter?
\item See that for a perfectly normal space $X$, $T''(X)=T'(X)=C(X)_\mathcal{P}$, where $\mathcal{P}$ is the collection of all closed nowhere dense sets and for an almost $P$-space $X$, $T''(X)=C(X)=C(X)_\mathcal{P}$, where $\mathcal{P}=\{\emptyset \}$. We speculate whether $T''(X)$ can always be expressed as $C(X)_\mathcal{P}$ for some ideal $\mathcal{P}$ of closed subsets of $X$. 
\item When $X$ is perfectly normal, $T''(X)=T'(X)$ which is a regular ring, and hence an almost regular ring as well. Furthermore, when $X$ is a nowhere almost $P$-space, we have seen that $T''(X)$ is almost regular. Moreover, when $X$ is an almost $P$-space, $T''(X)=C(X)$ which is an almost regular ring. So we raise the question: Does there exist a topological space $X$ for which $T''(X)$ is not an almost regular ring? 
\end{enumerate}

\section*{Acknowledgment}
The first author is immensely grateful for the award of research fellowship provided by the University Grants Commission, New Delhi (NTA Ref. No. 221610014636).

\end{document}